\providecommand{\tabularnewline}{\\}
\numberwithin{equation}{section}
\numberwithin{figure}{section}
\theoremstyle{plain}
\newtheorem{thm}{\protect\theoremname}[section]
  \theoremstyle{definition}
  \newtheorem{defn}[thm]{\protect\definitionname}
  \theoremstyle{plain}
  \newtheorem{conjecture}[thm]{\protect\conjecturename}
  \theoremstyle{remark}
  \newtheorem{rem}[thm]{\protect\remarkname}
  \theoremstyle{plain}
  \newtheorem{lem}[thm]{\protect\lemmaname}
  \theoremstyle{plain}
  \newtheorem{prop}[thm]{\protect\propositionname}
\newtheorem*{Notation*}{Notation}
\subjclass[2010]{Primary 14J45; Secondary 14M15}
\newenvironment{myitemize}
    {\begin{itemize}[leftmargin=*]
\addtolength{\leftmargin}{0in}
}
    {\end{itemize}}
  \providecommand{\conjecturename}{Conjecture}
  \providecommand{\definitionname}{Definition}
  \providecommand{\lemmaname}{Lemma}
  \providecommand{\propositionname}{Proposition}
  \providecommand{\remarkname}{Remark}
\providecommand{\theoremname}{Theorem}
\begin{document}
\global\long\def\C{\mathbb{\mathbb{C}}}
\global\long\def\R{\mathbb{\mathbb{R}}}
\global\long\def\Q{\mathbb{\mathbb{Q}}}
\global\long\def\Z{\mathbb{\mathbb{Z}}}
\global\long\def\N{\mathbb{\mathbb{N}}}
\global\long\def\k{\Bbbk}
\global\long\def\bP{\mathbb{P}}

\global\long\def\e{\epsilon}
 \global\long\def\O{\mathcal{O}}
\global\long\def\dual{\!\vee}
\global\long\def\I{\mathcal{I}}

\global\long\def\Exc#1{\mathrm{Exc}(#1)}

\global\long\def\Effc#1#2{\overline{\mathrm{Eff}}_{#1}(#2)}
\global\long\def\EffcX#1{\overline{\mathrm{Eff}}(#1)}
\global\long\def\Eff#1#2{\mathrm{Eff}_{#1}(#2)}
\global\long\def\Neff#1#2{\mathrm{Nef}_{#1}(#2)}
\global\long\def\Nef#1{\mathrm{Nef}(#1)}

\global\long\def\Hom{\mathit{\mathrm{Hom}}}
\global\long\def\Ker{\mathrm{Ker}}
\global\long\def\rk{\mathrm{rk}}

\global\long\def\Num{\mathrm{Num}}
\global\long\def\Rat{\mathrm{Rat}}
\global\long\def\Alg{\mathrm{Alg}}
\global\long\def\P#1{\mathbb{\mathbb{P}}^{#1}}
 \global\long\def\Pn{\mathbb{\mathbb{P}}^{n}}
\global\long\def\Pw{\mathbb{\mathbb{P}}(\mathbf{w})}

\global\long\def\g{\mathfrak{g}}
\global\long\def\h{\mathfrak{h}}
\global\long\def\b{\mathfrak{b}}
\global\long\def\p{\mathfrak{p}}

\global\long\def\Yb{Y_{b}}
\global\long\def\Xb{X_{b}}
\global\long\def\h#1#2{h^{#1,#2}}
\global\long\def\ra{\rightarrow}
\global\long\def\c{\chi}
\global\long\def\U{\mathcal{U}}
\global\long\def\s{\sigma}
\global\long\def\v{\!\vee}

\title{Betti numbers and pseudoeffective cones in 2-Fano varieties}

\author{Giosu\`{e} Emanuele Muratore}

\date{13 November 2017}
\begin{abstract}
The 2-Fano varieties, defined by De Jong and Starr, satisfy some higher
dimensional analogous properties of Fano varieties. We propose a definition
of (weak) $k$-Fano variety and conjecture the polyhedrality of the
cone of pseudoeffective {\normalsize{}$k$}-cycles for those varieties
in analogy with the case $k=1$. Then, we calculate some Betti numbers
of a large class of $k$-Fano varieties to prove some special case
of the conjecture. In particular, the conjecture is true for all 2-Fano
varieties of index $\ge n-2$, and also we complete the classification
of weak 2-Fano varieties answering Questions 39 and 41 in \cite{MR3114934}.
\end{abstract}

\address{Universit\`{a} Degli Studi Roma Tre\\
Dipartimento di Matematica e Fisica\\
Largo San Murialdo 1, 00146 Roma Italy.}

\address{Universit\'{e} de Strasbourg, CNRS\\
IRMA\\
7 Rue Ren\'{e} Descartes, 67000 Strasbourg France.}

\email{gmuratore@mat.uniroma3.it}

\keywords{2 Fano, Pseff cone}

\maketitle

\section{Introduction}

The study of cones of curves or divisors on smooth complex projective
varieties $X$ is a classical subject in Algebraic Geometry and is
still an active research topic. However, little is known when we pass
to higher dimensions. For example it is a classical result that the
cone of nef divisors is contained in the cone of pseudoeffective divisors,
but in general $\Neff kX\subseteq\Effc kX$ is not true. These phenomena
can appear only if $\dim X\geq4$ and very few examples are known.
In particular \cite{MR2862063} gives two examples of such varieties.
Furthermore \cite{Ottem:inpress} proves that if $X$ is the variety
of lines of a very general cubic fourfold in $\P 5$, then the cone
of pseudoeffective 2-cycles on $X$ is strictly contained in the cone
of nef 2-cycles.

The central subject of this paper will be the $k$-Fano varieties.
\begin{defn}
A smooth Fano variety $X$ is $k$-Fano if the $s^{th}$ Chern character
$ch_{s}(X)$ is positive (see Definition \ref{def positive, nef})
for $1\le s\le k$, and weak $k$-Fano for $k>1$ if $X$ is $(k-1)$-Fano
and $ch_{k}(X)$ is nef.
\end{defn}
There is a large interest in studying varieties with positive Chern
characters. For example varieties with positive $ch_{1}(X)$ are Fano,
hence uniruled, that is there is a rational curve through a general
point. Fano varieties with positive second Chern character were introduced
by J. de Jong and J. Starr in \cite{dJS:inpress,MR2322679}. They
proved a (higher dimensional) analogue of this result: weak 2-Fano
varieties of pseudo-index at least 3 have a rational surface through a general point. Furthermore
if $X$ is weak 3-Fano then there is a rational threefold through
a general point of $X$ (under some hypothesis on the polarized minimal
family of rational curves through a general point of $X$, \cite[Theorem 1.5(3)]{MR2876140}).

Another problem concerns how the geometry of the cones of pseudoeffective
$k$-cycles depends on the positivity of the Chern characters $ch_{s}(X)$.
Mori\textquoteright{}s Cone Theorem resolves this problem for $k=1$:
the positivity of $ch_{1}(X)$ implies the polyhedrality of the cone
of pseudoeffective 1-cycles and the extremal rays are spanned by classes
of rational curves. By Kleiman\textquoteright{}s Theorem, a variety
with positive $ch_{1}(X)$ is just a Fano variety, that is with $c_{1}(X)$
ample, but this is not enough, in general, for the polyhedrality of
cones of pseudoeffective $k$-cycles for $k>1$: Tschinkel showed
a Fano variety where $\Eff 2X$ has infinitely many extremal rays.
Therefore more positivity is needed in order to obtain polyhedrality
of cones of pseudoeffective $k$-cycles for $k>1$.

In this paper we investigate a possible way of generalizing Mori\textquoteright{}s
result: 
\begin{conjecture}
\label{Problem}If $X$ is $k$-Fano, then $\Effc kX$ is a polyhedral
cone.
\end{conjecture}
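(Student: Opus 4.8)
The plan is to separate the genuinely open general conjecture from the special cases, notably $k=2$ with index $\ge n-2$, where it can be settled unconditionally. For $k=1$ the statement is precisely Mori's Cone Theorem, whose proof rests on bend-and-break and the rationality theorem to produce extremal rational curves; since no comparable machinery for $k$-cycles with $k>1$ is available, I do not expect a uniform argument of that type. Instead I would reduce polyhedrality, in each controlled case, to a purely linear-algebraic statement about the real numerical space $N_{k}(X)_{\mathbb{R}}$ in which $\Effc kX$ lives.

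The key observation is elementary: every closed convex cone in a real vector space of dimension at most two is automatically polyhedral, and $\dim_{\mathbb{R}} N_{k}(X)_{\mathbb{R}} \le b_{2k}(X)$, because numerical equivalence is coarser than homological equivalence and numerical classes therefore map into $H_{2k}(X,\mathbb{R})$. Hence it suffices to show $b_{2k}(X) \le 2$ for the varieties in question. I would establish this using the Lefschetz hyperplane theorem, which controls the low-degree Betti numbers of complete intersections in projective spaces, quadrics and Grassmannians once $n = \dim X$ is large, together with an explicit case-by-case computation of the cohomology ring for the finitely many varieties of small dimension where the relevant degree is near the middle.

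For the concrete case $k=2$, index $\ge n-2$, I would invoke the existing classification of weak $2$-Fano manifolds of high index, completing its open entries (Questions 39 and 41 of \cite{MR3114934}) by testing the sign of $ch_{2}(X)$ on each remaining candidate; positivity of $ch_{1}$ and $ch_{2}$ is very restrictive and should leave only a short list. For every variety on that list I would read $b_{4}(X)$ off its cohomology ring and verify $b_{4}(X) \le 2$, which immediately yields that $\Effc 2X$ is polyhedral.

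The main obstacle is twofold. For the general conjecture, the absence of any higher-dimensional bend-and-break means there is no known mechanism forcing the extremal rays of $\Effc kX$ to be spanned by distinguished classes, so polyhedrality cannot at present be deduced from positivity of $ch_{k}$ alone. Within the special cases, the delicate point arises exactly when the Betti bound only gives $b_{2k}(X) \ge 3$: the planar argument then fails, and for such a variety one must either show that $N_{k}(X)_{\mathbb{R}}$ is in fact smaller than $b_{2k}(X)$ (when part of the cohomology is non-algebraic) or produce an explicit generating set for $\Effc kX$---for the homogeneous members, for instance, by exhibiting it as the cone spanned by Schubert cycles---in place of the automatic low-dimensional argument.
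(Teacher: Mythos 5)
Your proposal matches the paper's strategy: the paper likewise leaves the general conjecture open and establishes exactly the special cases you describe, by (i) bounding $b_{2k}(X)\le 2$ via Lefschetz-type arguments so that the surjection from cohomology onto $N_{k}(X)$ forces the cone to live in a space of dimension at most two, (ii) running through the Araujo--Castravet classification in index $\ge n-2$ after settling the open entries with an explicit $ch_{2}$ computation on the $(1,1)$ sections of $G(2,5)$ and $G(2,6)$, and (iii) handling the homogeneous members whose Betti numbers exceed $2$ by showing their pseudoeffective cones are spanned by Schubert classes. This is essentially the same approach as the paper's.
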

The computing of the fourth Betti number is enough to show the polyhedrality
of some of the cones of 2-cycles for a large class of varieties: complete
intersections in weighted projective spaces, rational homogeneous
varieties and most complete intersections in them, etc. This allows
us to test the conjecture for many 2-Fano varieties, and in particular
we prove that it holds for del Pezzo and Mukai varieties. Using the
classification of Araujo-Castravet, we also prove the following.
\begin{thm}\label{Teorema1}
Let $X$ be a $n$-dimensional 2-Fano variety with $i_{X}\ge n-2$. Then
\textup{$\Effc 2X$} is  polyhedral. Also, \textup{$\Effc 3X$} is polyhedral with the possible exception of the complete intersection of type $(2,2)$ in $\P8$.

In particular, Conjecture \ref{Problem} is true for any $n$-dimensional
$k$-Fano variety with $i_{X}\ge n-2$ and $k=2,3$.
\end{thm}
Let $X$ be a complete intersection in $G(2,5)$ or $G(2,6)$ with
two hyperplanes under the Pl\"{u}cker embedding.  Araujo and Castravet
proved that $X$ is not 2-Fano, but questioned if it is weak 2-Fano
\cite[Proposition 32 and Questions 39,41]{MR3114934}. In \cite[Corollary 5.1]{DeArruda}
it is proved that a general such $X$ is not 2-Fano by showing that
there exists an effective surface $S$ such that $[i(S)]=\sigma_{1,1}^{\vee}$,
where $i$ is the inclusion. In this circumstance we can prove that
all the smooth complete intersections of this type are not weak 2-Fano,
and this completes the classification given in \cite[Theorem 3 and 4]{MR3114934}.
\begin{thm}
\label{thm:Gen De Arruda}Let $Y=G(2,5)$ or $G(2,6)$, let $X$ be
a smooth complete intersection of type $(1,1)$ in $Y$ under the
Pl\"{u}cker embedding. Then $X$ is not weak 2-Fano.
\end{thm}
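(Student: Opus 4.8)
The plan is to produce, on \emph{every} smooth $X$, an effective surface $S$ with $ch_2(X)\cdot[S]<0$; since a weak $2$-Fano variety must have $ch_2(X)$ nef, this immediately rules out the weak $2$-Fano property. Write $Y=G(2,n)$ with $n\in\{5,6\}$, so that $\dim X=2(n-2)-2$ equals $4$ or $6$, and $X=Y\cap H_1\cap H_2$ with $N_{X/Y}\cong\mathcal{O}_X(1)^{\oplus2}$. Following \cite[Corollary 5.1]{DeArruda}, the candidate class is $\sigma_{1,1}^{\vee}$, whose Poincar\'e dual identifies it with the Schubert class $\sigma_{2,2}$ on $G(2,5)$ and $\sigma_{3,3}$ on $G(2,6)$; De Arruda already shows it is represented by an effective surface for a \emph{general} such $X$. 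I would split the argument into a deformation-invariant computation of the intersection number $ch_2(X)\cdot\sigma_{1,1}^{\vee}$ and a geometric step upgrading effectivity to all smooth $X$.

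For the numerical part I would use the normal bundle sequence $0\to T_X\to T_Y|_X\to N_{X/Y}\to0$, which gives $ch_2(T_X)=ch_2(T_Y)|_X-h^2$ with $h=\sigma_1$ the Pl\"ucker class. On $G(2,n)$ one has $T_Y\cong\mathcal{S}^{\vee}\otimes\mathcal{Q}$; expanding $ch(\mathcal{S}^{\vee}\otimes\mathcal{Q})=ch(\mathcal{S}^{\vee})\,ch(\mathcal{Q})$ and using $\sigma_1^2=\sigma_2+\sigma_{1,1}$ yields $ch_2(T_Y)=\tfrac{n-2}{2}\sigma_2+\tfrac{6-n}{2}\sigma_{1,1}$. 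Subtracting $h^2=\sigma_2+\sigma_{1,1}$ gives $ch_2(T_X)=\tfrac12(\sigma_2-\sigma_{1,1})|_X$ for $n=5$ and $ch_2(T_X)=(\sigma_2-\sigma_{1,1})|_X$ for $n=6$. Writing $ch_2(T_X)=i^{*}\beta$ and applying the projection formula, $ch_2(X)\cdot[S]=\int_Y\beta\cup\sigma_{1,1}^{\vee}$. Since $\sigma_2\cdot\sigma_{2,2}=0$ and $\sigma_{1,1}\cdot\sigma_{2,2}=1$ on $G(2,5)$ (and $\sigma_2\cdot\sigma_{3,3}=0$, $\sigma_{1,1}\cdot\sigma_{3,3}=1$ on $G(2,6)$, by complementary Schubert pairing), I obtain $ch_2(X)\cdot\sigma_{1,1}^{\vee}=-\tfrac12$ and $-1$ respectively, both strictly negative. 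This number depends only on the deformation type, hence is the same for every smooth $X$. One should also record that $-K_X=(n-2)h$ is ample, so $X$ is genuinely Fano and the statement is non-vacuous.

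The geometric step, which I expect to be the main obstacle, is to show $\sigma_{1,1}^{\vee}$ is represented by an effective cycle on \emph{every} smooth $X$, not merely the general one: effectivity of a fixed numerical class is not a closed condition, so this does not follow formally from \cite{DeArruda}. The smooth complete intersections of type $(1,1)$ in $Y$ form an irreducible family $\pi\colon\mathcal{X}\to B$, where $B$ is the open subset of the Grassmannian of pencils of Pl\"ucker hyperplanes giving smooth fibres. I would consider the relative Chow variety of effective $2$-cycles of class $\sigma_{1,1}^{\vee}$ in the fibres; this class is the restriction of an ambient Schubert class, hence monodromy-invariant and locally constant, so the corresponding component $\mathcal{C}\to B$ is well defined and proper. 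By \cite[Corollary 5.1]{DeArruda} the map $\mathcal{C}\to B$ is dominant; being proper, its image is closed and contains a dense open subset, hence equals $B$. Therefore every smooth $X=\mathcal{X}_b$ carries an effective surface $S$ of class $\sigma_{1,1}^{\vee}$. Combined with the computation above, $ch_2(X)\cdot[S]<0$ on every smooth $X$, so $ch_2(X)$ is not nef and $X$ is not weak $2$-Fano; this settles Questions 39 and 41 of \cite{MR3114934} for all smooth members and completes the classification.
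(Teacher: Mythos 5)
Your proof is correct, and at its core it is the same argument as the paper's: take De Arruda's effective surface of class $\sigma_{1,1}^{\vee}$ on a very general member, check that $ch_{2}$ pairs negatively with it, and propagate effectivity to every smooth member by a properness/specialization argument (your relative Chow variety argument is exactly the content of the fact the paper cites from \cite[Proposition 3]{Ottem:inpress}). The one genuine difference is in the numerical step: you evaluate $ch_{2}(X)\cdot[S]$ by writing $ch_{2}(X)=i^{*}\beta$ and pushing forward, $\int_{X}i^{*}\beta\cdot[S]=\int_{Y}\beta\cdot i_{*}[S]=\int_{Y}\beta\cdot\sigma_{1,1}^{\vee}$, so you only ever need the class of $S$ in $H_{4}(Y)$. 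The paper instead expresses $[S_{t}]$ in a basis of $H^{4}(X_{t})$ (resp.\ of the torsion-free part of $H^{8}(X_{t})$ for $G(2,6)$), which requires the Lefschetz-type isomorphism $H^{4}(Y,\Z)\cong H^{4}(X_{t},\Z)$ and hence Lemma \ref{lem:b4(X11)=00003D2}, plus the explicit intersection matrices, in order to build the ambient family class $\mathcal{S}=pr_{1}^{*}(-\sigma_{2}+2\sigma_{1,1})$ (resp.\ $pr_{1}^{*}(-\sigma_{4}+\sigma_{2,2})$). Your projection-formula shortcut buys a cleaner argument that sidesteps those cohomology computations entirely, and it also makes the specialization step more robust, since the Chow component you use is cut out by the pushforward class in $H_{4}(Y)$, which is manifestly locally constant; the paper's route has the side benefit of producing the explicit class of $S_{t}$ inside $X_{t}$, which it reuses elsewhere. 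Both yield $-\tfrac{1}{2}$ for $G(2,5)$ and $-1$ for $G(2,6)$, in agreement.
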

These ideas can be improved in three very promising directions: to
generalize Tschinkel\textquoteright{}s example to higher dimensions,
to prove the conjecture for some Fano 4-folds of index 1, and to use
minimal families of rational curves to prove the conjecture for other
2-Fano\textquoteright{}s.

\thanks{I thank Angelo Lopez for all the support he has shown me since the
beginning of this work, and Gianluca Pacienza for his help. I also
thank Carolina Araujo, Izzet Coskun and Enrico Fatighenti for answering
many of my questions. }

\section{General facts about cycles}

A variety is a reduced and irreducible algebraic scheme over $\C$.

Throughout this paper we will use the following.

\begin{Notation*}$\hspace{1em}$

\begin{myitemize}

\item $X$ is a variety of dimension $n\ge4$.

\item$k$ is an integer such that $1\le k\le n-1$.

\item$H_{i}(X,G)$ and $H^{i}(X,G)$ are the singular homology and
cohomology groups of $X$ for $1\le i\le2n$ and coefficients in a
group $G$.

\item$b_{i}(X)$ is the $i^{th}$ Betti number of X for $1\le i\le2n$,
that is the rank of $H_{i}(X,\Z)$ or of $H^{i}(X,\Z)$.

\item$Z_{k}(X)$ is the group of $k$-cycles with integer coefficients.

\item$\Rat_{k}(X)$ is the group of $k$-cycles rationally equivalent
to zero.

\item$A_{k}(X)$ is the Chow group of $k$-cycles on $X$, that is
$A_{k}(X)=\nicefrac{Z_{k}(X)}{\Rat_{k}(X)}$.

\item$A_{*}(X)=\bigoplus_{k=0}^{n}A_{k}(X)$ is the Chow ring of
$X$.

\item$\mathrm{Alg}_{k}(X)$ is the group of $k$-cycles algebraically
equivalent to zero.

\item$\Hom_{k}(X)$ is the group of $k$-cycles homologically equivalent
to zero, that is the kernel of the cycle map $cl:Z_{k}(X)\rightarrow H_{2k}(X,\Z)$.

\item$\Num_{k}(X)$ is the group of cycles numerically equivalent
to zero, that is the group of cycles $\alpha\in Z_{k}(X)$ such that
$P\cdot cl(\alpha)=0$ for all polynomials $P$ in Chern classes of
vector bundles on $X$.

\item$N_{k}(X)$ is the quotient group $\nicefrac{Z_{k}(X)}{\Num_{k}(X)}$,
and $N_{k}(X)_{\R}:=N_{k}(X)\otimes\R$.

\item$\Eff kX\subseteq N_{k}(X)_{\R}$ is the cone generated by numerical
classes of effective $k$-cycles.

\item  Let $s\ge1$ be an integer. The $s^{th}$ Chern character of $X$, $ch_{s}(X)$, is the homogeneous part of degree $s$ of the total Chern character of $X$. For example, if $c_{i}(X)$ are the Chern classes of $X$, then $ch_{1}(X)=c_{1}(X)$, $ch_{2}(X)=\frac{1}{2}(c_{1}^{2}(X)-2c_{2}(X))$,  $ch_{3}(X)=\frac{1}{6}(c_{1}^{3}(X)-4c_{1}(X)c_{2}(X)+3c_{3}(X))$

\end{myitemize}\end{Notation*}

We will often use the following well-known facts:
\begin{rem}
There is a chain of inclusions \cite[p.374]{MR1644323}

\[
\Rat_{k}(X)\subseteq\mathrm{Alg}_{k}(X)\subseteq\Hom_{k}(X)\subseteq\Num_{k}(X)\subseteq Z_{k}(X)
\]
that gives rise to a diagram
\begin{equation}
\xymatrix{A_{k}(X)\ar@{->>}[r] & \nicefrac{Z_{k}(X)}{\mathrm{Alg}_{k}(X)}\ar@{->>}[r] & \nicefrac{Z_{k}(X)}{\Hom_{k}(X)}\ar@{^{(}->}[d]\ar@{->>}[r]^{\quad\;\pi_{k}} & N_{k}(X)\\
 &  & H_{2k}(X,\Z)
}
\label{eq:Diagramma}
\end{equation}
We set 
\begin{equation}
\pi_{k,\R}:\nicefrac{Z_{k}(X)}{\Hom_{k}(X)}\otimes\R\twoheadrightarrow N_{k}(X)_{\R}\label{eq:pi_k_R}
\end{equation}
the tensor product of $\pi_{k}$ and $id_{\R}$.
\end{rem}

\begin{rem}
By linearity of the intersection product, $N_{k}(X)$ is torsion free.
When $X$ is smooth, the intersection product gives a perfect pairing
\cite[Definition 19.1]{MR1644323} 
\[
N_{k}(X)_{\R}\otimes N_{n-k}(X)_{\R}\rightarrow\R.
\]

\end{rem}

\begin{defn}
\label{def positive, nef}Let $X$ be a smooth variety. A class $\alpha\in N_{k}(X)_{\R}$
is \emph{positive} if $\alpha\cdot\beta>0$ for every $\beta\in\Effc{n-k}X\backslash\{0\}$,
and it is \emph{nef} if $\alpha\cdot\beta\ge0$ for every $\beta\in\Effc{n-k}X$.
The cone generated by nef classes of $k$-cycles is $\Neff kX$.
\end{defn}
Kleiman\textquoteright{}s criterion for amplitude \cite[Theorem 1.4.29]{MR2095471}
states that the cone of positive $(n-1)$-cycles is exactly the cone
of numerical classes of ample divisors.
\begin{lem}
\label{lem:Lemma 1}Let $X$ be a projective variety. Then
\begin{enumerate}
\item If either $\rk A_{k}(X)=1$ or $b_{2k}(X)=1$, then $\Effc kX$ is
a half-line.
\item If either $\rk A_{k}(X)=2$ or $b_{2k}(X)=2$, then $\Effc kX$ is
either a half-line or it is spanned by two extremal rays.
\end{enumerate}
\end{lem}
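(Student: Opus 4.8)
The plan is to reduce the whole statement to a dimension count for $N_{k}(X)_{\R}$ together with the pointedness of the pseudoeffective cone.

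First I would read off the numerical consequences of the hypotheses from \eqref{eq:Diagramma}. The composite $A_{k}(X)\twoheadrightarrow N_{k}(X)$ is surjective, so $\rk N_{k}(X)\le\rk A_{k}(X)$; and since $\nicefrac{Z_{k}(X)}{\Hom_{k}(X)}$ both injects into $H_{2k}(X,\Z)$ and surjects onto $N_{k}(X)$ via $\pi_{k}$, we also get $\rk N_{k}(X)\le b_{2k}(X)$. Hence the hypothesis in (1) forces $\dim_{\R}N_{k}(X)_{\R}\le1$ and the hypothesis in (2) forces $\dim_{\R}N_{k}(X)_{\R}\le2$. Conversely $N_{k}(X)\neq0$, because a general linear section of dimension $k$ is an effective cycle whose degree against $H^{n-k}$ (for $H$ ample) is positive, so its class is nonzero. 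Thus the dimension is exactly $1$ in case (1), and equals $1$ or $2$ in case (2).

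Next I would fix an ample class $H$ and work with the functional $\ell(\alpha):=H^{n-k}\cdot\alpha$. Since $\ell([V])=\deg_{H}V>0$ for every $k$-dimensional subvariety $V$, we have $\ell\ge0$ on the cone of effective classes and hence on its closure $\Effc kX$. This already settles the one-dimensional situations: a closed convex cone in a line which lies in $\{\ell\ge0\}$ and contains a nonzero class is exactly that half-line. So (1) is done, as is the subcase $\dim N_{k}(X)_{\R}=1$ of (2).

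The only delicate case is $\dim N_{k}(X)_{\R}=2$, and here I expect the main obstacle to be showing that $\Effc kX$ is \emph{pointed}, i.e. contains no line; granting this, a pointed, closed, convex, nonzero cone in the plane is either a half-line or the angular sector spanned by its two extreme rays, which is precisely the dichotomy in (2). My approach to pointedness is the Kleiman-type argument: if $\gamma$ and $-\gamma$ were both pseudoeffective, then for every nef class $\eta\in\Neff{n-k}X$ one would have $\eta\cdot\gamma\ge0$ and $\eta\cdot(-\gamma)\ge0$, whence $\eta\cdot\gamma=0$ for all such $\eta$; by nondegeneracy of the intersection pairing $N_{k}(X)_{\R}\times N_{n-k}(X)_{\R}\to\R$ (perfect when $X$ is smooth) this gives $\gamma=0$ as soon as $\Neff{n-k}X$ spans $N_{n-k}(X)_{\R}$. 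Equivalently, and this is the concrete input I would supply, cycles of bounded $H$-degree form a bounded family and, because $\Alg_{k}(X)\subseteq\Num_{k}(X)$, represent only finitely many numerical classes; I would then leverage a comparison of degrees against different ample classes to confine the normalized effective classes to a bounded subset of the slice $\{\ell=1\}$, so that $\Effc kX$ becomes the cone over a compact base missing the origin and is therefore pointed. Once pointedness is established, the planar classification finishes the proof.
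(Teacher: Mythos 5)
Your argument is essentially the paper's: the decisive step in both is the bound $\rk N_{k}(X)\le\min(\rk A_{k}(X),b_{2k}(X))$ read off from diagram (\ref{eq:Diagramma}) together with torsion-freeness of $N_{k}(X)$, after which only the convex geometry of closed cones in dimension $\le 2$ remains. The paper is terser at the end -- it simply asserts the dichotomy from the fact that $\Effc kX$ generates $N_{k}(X)_{\R}$, tacitly invoking the standard non-triviality and pointedness of the pseudoeffective cone that you make explicit via the degree functional $H^{n-k}\cdot(-)$ and the boundedness/Kleiman-type argument -- so your extra care fills in an implicit step rather than taking a different route.
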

\begin{proof}
In the first case, by diagram (\ref{eq:Diagramma}), we have a surjection
$\Z\twoheadrightarrow N_{k}(X)$ and, as $N_{k}(X)$ is torsion-free,
it must be $N_{k}(X)\cong\Z$. In the second case, again by diagram
(\ref{eq:Diagramma}), there is a surjection $\Z^{2}\twoheadrightarrow N_{k}(X)$
and then either $N_{k}(X)\cong\Z$ or $N_{k}(X)\cong\Z^{2}$. Since
$\Effc kX$ generates $N_{k}(X)_{\R}$, it is either a half-line or
it is spanned by two extremal rays, depending on the rank of $N_{k}(X)_{\R}$.\end{proof}
\begin{rem}
In a general, a variety $X$ with $ch_{k}(X)$ positive may not be
$k$-Fano. For example, in \cite{MR527834} Mumford found a smooth
surface $S$ of general type with $ch_{2}(S)=\frac{3}{2}$.
\end{rem}

\section{Cycles on Fano Varieties}

We study here the pseudoeffective cones of $k$-cycles on some well-known
classes of Fano varieties.

\subsection{Weighted projective spaces}

Let $\Pw$ be the weighted projective space where $\mathbf{w}=(w_{0},...,w_{n})\in\N_{0}^{n}$.
\begin{prop}
\label{prop:3}Let $X$ be a $n$-dimensional smooth complete intersection
in a weighted projective space. If $k\neq\frac{n}{2}$ then $b_{2k}(X)=1$.
In particular $\Effc kX$ is polyhedral.\end{prop}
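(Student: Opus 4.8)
The plan is to compute the Betti numbers of a smooth complete intersection $X$ in a weighted projective space and show that the even Betti numbers below the middle dimension all equal $1$. The key tool is the Lefschetz hyperplane theorem together with the known cohomology of the ambient weighted projective space. First I would recall that a weighted projective space $\Pw$ is a rational variety with $\Q$-factorial quotient singularities, and that its rational cohomology mimics that of ordinary projective space: $H^{2i}(\Pw,\Q)\cong\Q$ for $0\le i\le n$ and $H^{\mathrm{odd}}(\Pw,\Q)=0$. Since $X$ is assumed smooth, a suitable version of the Lefschetz hyperplane theorem (valid in this quasi-smooth complete-intersection setting, e.g.\ via the Dolgachev--Steenbrink description of the cohomology of weighted complete intersections) gives an isomorphism $H^{i}(\Pw,\Q)\xrightarrow{\sim} H^{i}(X,\Q)$ for $i<\dim X=n$ and an injection for $i=n$.

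Granting this, the argument is short. Fix $k$ with $1\le k\le n-1$ and suppose $k\neq n/2$, so that $2k\neq n$. If $2k<n$, then by Lefschetz $b_{2k}(X)=\dim_{\Q}H^{2k}(X,\Q)=\dim_{\Q}H^{2k}(\Pw,\Q)=1$. If instead $2k>n$, I would invoke Poincar\'e duality on the smooth projective variety $X$: $b_{2k}(X)=b_{2n-2k}(X)$, and since $2n-2k<n$ exactly when $2k>n$, the previous case applies to give $b_{2n-2k}(X)=1$. Either way $b_{2k}(X)=1$. The hypothesis $k\neq n/2$ is precisely what guarantees we never land on the middle cohomology, where the Lefschetz and duality arguments give no control and the Betti number can jump.

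Finally, the polyhedrality conclusion is immediate from the machinery already set up in the excerpt: once $b_{2k}(X)=1$, part (1) of Lemma \ref{lem:Lemma 1} tells us that $\Effc kX$ is a half-line, which is in particular polyhedral. So no further work on the cone itself is needed.

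The main obstacle, and the step I would be most careful about, is justifying the Lefschetz hyperplane theorem in the weighted (singular) setting. One cannot simply cite the classical smooth statement, since the ambient $\Pw$ has quotient singularities; the point is that $X$ is a \emph{smooth} complete intersection avoiding (or transverse enough to) the singular locus, and that rational cohomology of $\Pw$ behaves like that of $\Pn$. I would lean on the established theory of weighted complete intersections, where the analogue of the Lefschetz theorem holds with $\Q$-coefficients, to secure the isomorphisms $H^{i}(\Pw,\Q)\cong H^{i}(X,\Q)$ in the relevant range; everything else is formal.
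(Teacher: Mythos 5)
Your proposal is correct and follows essentially the same route as the paper: the rational cohomology of $\Pw$ is one-dimensional in each even degree, the Lefschetz hyperplane theorem for weighted complete intersections (the paper cites Dimca's book, \cite[B13, B22]{MR1194180}, which covers exactly the singular-ambient-space version you were worried about) gives $b_{2k}(X)=1$ for $2k<n$, Poincar\'e duality handles $2k>n$, and Lemma \ref{lem:Lemma 1}(1) yields polyhedrality. No substantive difference.
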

\begin{proof}
Recall \cite[B13]{MR1194180} that $\dim H^{2i}(\Pw,\Q)=1$ for every
$0\le i\le\dim\Pw$. By Lefschetz's Hyperplane Theorem \cite[B22]{MR1194180}
we have that $H^{2k}(X,\Q)\cong H^{2k}(\Pw,\Q)$ for $2k<n$, then
$b_{2k}(X)=1$ for $k<\frac{n}{2}$. But $b_{2n-2k}(X)=b_{2k}(X)$,
then it follows that, for $k\neq\frac{n}{2}$, $b_{2k}(X)=1$ and
by Lemma \ref{lem:Lemma 1} that $\Effc kX$ is a half-line.
\end{proof}
Furthermore, if $X$ is a $k$-Fano complete intersection in a projective
space, then we can solve Conjecture \ref{Problem}, even for weak
Fano.
\begin{thm}
\label{thm:Conj for comp int in Pn}Let $X$ be a $n$-dimensional
weak $k$-Fano complete intersection in a projective space. If $1\le s\le k$,
then $b_{2s}(X)\le2$. In particular $\Effc sX$ is polyhedral.\end{thm}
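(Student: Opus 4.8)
The plan is to reduce the Betti number computation to understanding the cohomology of the complete intersection $X$ via Lefschetz-type theorems, exactly as in Proposition \ref{prop:3}, but now accounting for the middle-dimensional case $k=\frac{n}{2}$ where the clean bound $b_{2k}(X)=1$ can fail. The key observation is that for $1\le s\le k$ with $s\ne\frac{n}{2}$, Proposition \ref{prop:3} already gives $b_{2s}(X)=1\le 2$, and by Lemma \ref{lem:Lemma 1}(1) the cone $\Effc sX$ is a half-line, hence polyhedral. So the entire content of the theorem concentrates on the single value $s=\frac{n}{2}$ (which can occur at most once in the range and only when $n$ is even); for that value I must show $b_{n}(X)\le 2$.

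**First I would** set up the middle cohomology of $X\subseteq\Pn$. By the Lefschetz Hyperplane Theorem, $H^{j}(X,\Q)\cong H^{j}(\Pn,\Q)$ for $j<n=\dim X$, and by Poincar\'e duality the same holds in the range $j>n$; the only room for extra cohomology is in degree $j=n$. The crucial point is that $s=\frac{n}{2}$ means I am looking at $b_{2s}(X)=b_{n}(X)$, the middle Betti number. Here I would invoke the hypothesis that $X$ is weak $k$-Fano, so in particular Fano, which forces $H^{n}(X)$ to have Hodge type restrictions. The decisive input is that a Fano complete intersection has $H^{j,0}(X)=0$ for all $j>0$ (since $-K_X$ is ample, all plurigenera and intermediate holomorphic forms vanish). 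I expect to combine this with the explicit knowledge of the Hodge numbers $h^{p,q}(X)$ of complete intersections, which are computable and tabulated, to bound the primitive middle cohomology.

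**The main obstacle** is controlling $b_{n}(X)$ in the middle dimension, where genuinely new cohomology appears and the Lefschetz theorem gives no information. The strategy I would pursue is that the positivity hypothesis $ch_{s}(X)$ positive (or nef) for $1\le s\le k$ with $k\ge\frac{n}{2}$ imposes severe constraints on the degrees $(d_1,\dots,d_c)$ of the complete intersection: a high index forces the $d_i$ to be small, and low-degree complete intersections in the middle dimension have small primitive cohomology. Concretely, I would argue that the weak $k$-Fano condition with $s=\frac{n}{2}$ in range forces the variety to have index $i_X\ge\frac{n}{2}$-ish, placing $X$ among a short list (projective spaces, quadrics, low-degree hypersurfaces or intersections) whose middle Hodge numbers can be read off directly and shown to satisfy $h^{\frac{n}{2},\frac{n}{2}}(X)\le 2$ with all off-diagonal middle Hodge numbers small enough that $b_n(X)\le 2$.

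**I would conclude** by feeding the bound $b_{2s}(X)\le 2$ into Lemma \ref{lem:Lemma 1}: part (1) handles $b_{2s}(X)=1$ and part (2) handles $b_{2s}(X)=2$, and in either case $\Effc sX$ is either a half-line or spanned by two extremal rays, hence polyhedral. The honest difficulty, which I would flag, is verifying the middle-dimensional bound uniformly: it is not a priori clear that weak $k$-Fano positivity alone forces $b_n(X)\le 2$ for \emph{every} such complete intersection, so the argument likely requires either an explicit enumeration via the adjunction/index computation $i_X=n+1-\sum d_i$ or a direct cohomological estimate showing that the primitive middle cohomology $H^n_{\mathrm{prim}}(X)$ is forced to be at most one-dimensional under these constraints.
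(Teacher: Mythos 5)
Your reduction is the right one and matches the paper's first step: Proposition \ref{prop:3} disposes of every $s\neq\frac{n}{2}$, so everything hinges on bounding the middle Betti number $b_n(X)$ when $n$ is even and $s=\frac{n}{2}\le k$. But at that point you stop short of the actual argument and, as you yourself flag, leave the decisive step unverified. That step is not a minor verification: it is the entire content of the theorem beyond Proposition \ref{prop:3}. Moreover, the two routes you sketch for it would not work. The Hodge-theoretic route (using $h^{j,0}(X)=0$ for Fano $X$ plus ``tabulated'' Hodge numbers) cannot by itself bound $b_n(X)$ --- a cubic fourfold is Fano of index $3$ with $h^{3,0}=0$ and yet $b_4=23$, so vanishing of the outer Hodge numbers says nothing about the size of the primitive middle cohomology. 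The index route is likewise too weak: $i_X=n+c+1-\sum d_i$ only controls the sum of the degrees linearly, and low-degree hypersurfaces of high index still have enormous middle Betti numbers.

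The missing ingredient is the explicit numerical criterion of Araujo--Castravet \cite[3.3.1]{MR3114934}: for $X$ of type $(d_1,\dots,d_c)$ in $\bP^{n+c}$ with all $d_i\ge 2$, the class $ch_{\frac{n}{2}}(X)$ is nef if and only if $d_1^{n/2}+\dots+d_c^{n/2}\le n+c+1$. This involves $\frac{n}{2}$-th \emph{powers} of the degrees, not their sum, and since $n\ge 4$ each term is at least $4$; one immediately gets $c=1$ and then $d_1^{n/2}\le n+2$ forces $d_1=2$. So the only surviving case is the $n$-dimensional quadric, for which $b_n(X)=2$ by \cite[p.20]{Reid:thesis}, and Lemma \ref{lem:Lemma 1} finishes the proof. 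Without identifying this criterion (or an equivalent computation of $ch_{n/2}$ for complete intersections), your enumeration cannot get off the ground, so the proposal has a genuine gap precisely where the theorem's content lies.
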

\begin{proof}
Let $X$ be of type $(d_{1},...,d_{c})$ in $\bP^{n+c}$, with $d_{i}\ge2$
for $1\le i\le c$. By Proposition \ref{prop:3}, we can suppose $n$
even and $s=\frac{n}{2}$. We know from \cite[3.3.1]{MR3114934} that
$ch_{\frac{n}{2}}(X)$ is nef if and only if $d_{1}^{\frac{n}{2}}+...+d_{c}^{\frac{n}{2}}\le n+c+1$.
Since $n\ge4$, it follows easily that $c=1$. On the other hand $d_{1}^{\frac{n}{2}}\le n+2$
is possible only for $d_{1}=2$, that is $X$ is an $n$-dimensional
quadric. But $b_{n}(X)=2$ \cite[p.20]{Reid:thesis} and the theorem
follows by Lemma \ref{lem:Lemma 1}.
\end{proof}

\subsection{Rational homogeneous varieties}

Let $G$ be a reductive linear algebraic group defined over $\C$,
$B$ a Borel subgroup of $G$. We consider the set of simple $B$-positive
roots and denote by $S$ the corresponding set of reflections in the
Weyl group $W$. Then the pair $(W,S)$ is a Coxeter system in the
sense of \cite[Chapitre IV, D\'{e}finition 3]{MR0240238}. Let $l:W\rightarrow\N_{0}$
be the length function relative to the system $S$ of generators of
$W$. Furthermore we fix a subset $\Theta$ of $S$ and denote by
$W_{\Theta}$ the subgroup of $W$ generated by $\Theta$ and by $P$
a subgroup of $G$ associated to $\Theta$. Then the quotient $G/P$
is a projective variety, which is called a rational homogeneous variety.
Any rational homogeneous variety is a Fano variety \cite{MR0102800},
and the action of $G$ on $G/P$ by left multiplication is transitive.
Let $w_{0}$ (respectively, $w_{\theta}$) be the unique element of
maximal length of $W$ (respectively, $W_{\Theta}$). A simple calculation
shows that $\dim G/P=l(w_{0})-l(w_{\theta})$. The element $w_{0}$
and $w_{\theta}$ are characterized by the property \cite[Chapitre IV, Exercise 22]{MR0240238}
\begin{eqnarray}
l(ww_{0}) & = & l(w_{0})-l(w),\;\forall w\in W\label{eq:sum}\\
l(ww_{\theta}) & = & l(w_{\theta})-l(w),\;\forall w\in W_{\Theta}
\end{eqnarray}
that imply immediately $w_{0}^{2}=1$ and $w_{\theta}^{2}=1$. It
follows that, for every $w\in W$ 
\[
l(w_{0}w)=l((w_{0}w)^{-1})=l(w^{-1}w_{0}^{-1})=l(w^{-1}w_{0})=l(w_{0})-l(w^{-1})=l(w_{0})-l(w).
\]
Furthermore, set $W^{\Theta}=\left\{ w\in W/l(ws)=l(w)+1\;\forall s\in\Theta\right\} $.
We have, for every $(w,\bar{w})\in W^{\Theta}\times W_{\Theta}$,
\begin{equation}
l(w\bar{w})=l(w)+l(\bar{w}).\label{eq:additivity}
\end{equation}

\begin{prop}
\label{thm:Varieties acted by G are poly}Let $X$ be a smooth $n$-dimensional
variety and let $G$ be an affine group which acts transitively on
$X$. Suppose that, for every $k=1,...,n-1$, there exists a finite
family of subvarieties $\left\{ \Omega_{a}\right\} _{a\in I_{k}}$
of dimension $k$ such that
\begin{enumerate}
\item $\left\langle \left\{ \left[\Omega_{a}\right]/a\in I_{k}\right\} \right\rangle =H_{2k}(X,\Z)$
or $A_{k}(X)$, and
\item $\forall a\in I_{k},\exists b\in I_{n-k}$\textup{ such that $\Omega_{c}\cdot\Omega_{b}=\delta_{a,c}$
$\forall c\in I_{k}$.}
\end{enumerate}
Then $\Neff kX=\Effc kX=\Eff kX$ is polyhedral and simplicial.\end{prop}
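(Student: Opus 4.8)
The plan is to show that the families $\{\Omega_a\}$ and their duals cut out \emph{simplicial} cones onto which the numerical, effective and nef cones all collapse. First I would pass from the generation hypothesis (1) to the numerical groups: by diagram (\ref{eq:Diagramma}) the natural map carries the classes $[\Omega_a]$ onto a generating set of $N_{k}(X)$, so the real classes $\omega_a:=[\Omega_a]$ span $N_{k}(X)_{\R}$, and likewise the $\eta_c:=[\Omega_c]$ ($c\in I_{n-k}$) span $N_{n-k}(X)_{\R}$. Since $X$ is smooth these two spaces share a common dimension $r$ via the perfect pairing. Hypothesis (2) says the intersection matrix $(\omega_a\cdot\eta_c)$ has every row equal to a standard basis vector $e_{b(a)}$; as the rank of this matrix equals the rank of the (perfect) pairing, namely $r$, the assignment $a\mapsto b(a)$ must take exactly $r$ distinct values. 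Choosing $a_1,\dots,a_r$ with the $b(a_i)$ distinct, I would verify that $\{\omega_{a_i}\}$ is a basis of $N_{k}(X)_{\R}$ with dual basis $\{\eta_{b(a_i)}\}$, since $\omega_{a_i}\cdot\eta_{b(a_j)}=\delta_{ij}$, and that every remaining $\omega_a$ (resp. $\eta_c$) is numerically equal to one of the basis vectors or to $0$, by expanding in the dual basis and reading off the coefficients as intersection numbers from (2).

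The crux is to identify $\Effc kX$ with the simplicial cone $\mathrm{cone}(\omega_{a_1},\dots,\omega_{a_r})$. The inclusion $\supseteq$ is immediate because each $\omega_{a_i}$ is an effective class. For $\subseteq$, take any effective cycle $Z=\sum_j m_j V_j$ with $m_j>0$; writing $[Z]=\sum_i \lambda_i\omega_{a_i}$, duality gives $\lambda_i=[Z]\cdot\eta_{b(a_i)}=\sum_j m_j\,([V_j]\cdot[\Omega_{b(a_i)}])$. Here I would invoke Kleiman's transversality theorem: since the affine group $G$ acts transitively on $X$ and we work over $\C$, a general translate $g\,\Omega_{b(a_i)}$ meets $V_j$ transversally in the expected (zero) dimension, so the complementary-dimension intersection number $[V_j]\cdot[\Omega_{b(a_i)}]$ counts points and is $\ge 0$. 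Hence each $\lambda_i\ge 0$, giving $\Eff kX\subseteq\mathrm{cone}(\omega_{a_i})$; taking closures yields $\Effc kX=\Eff kX=\mathrm{cone}(\omega_{a_1},\dots,\omega_{a_r})$, a closed simplicial cone. I expect this to be the main obstacle, since the non-negativity of the $\lambda_i$ is precisely where transitivity of the action is essential: without a moving lemma the complementary intersection numbers could be negative.

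Finally I would run the identical argument in complementary dimension to obtain $\Effc{n-k}X=\mathrm{cone}(\eta_{b(a_1)},\dots,\eta_{b(a_r)})$, the simplicial cone on the dual basis. Then for $\alpha=\sum_i x_i\omega_{a_i}\in N_{k}(X)_{\R}$, nefness means $\alpha\cdot\beta\ge 0$ for all $\beta\in\Effc{n-k}X$, that is $\alpha\cdot\eta_{b(a_i)}=x_i\ge 0$ for every $i$. Thus $\Neff kX=\{\sum_i x_i\omega_{a_i}:x_i\ge 0\}$ coincides with $\mathrm{cone}(\omega_{a_i})$, and all three cones $\Neff kX=\Effc kX=\Eff kX$ agree and are polyhedral (indeed simplicial), as claimed.
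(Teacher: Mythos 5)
Your proof is correct and follows essentially the same route as the paper's: both use diagram (\ref{eq:Diagramma}) to descend hypothesis (1) to $N_{k}(X)_{\R}$, extract coefficients via the duality in hypothesis (2), and invoke Kleiman's transversality theorem for the transitive $G$-action to get non-negativity of complementary-dimension intersection numbers. Your only refinement is to extract an explicit dual basis $\{\omega_{a_i}\},\{\eta_{b(a_i)}\}$, which handles the possible non-injectivity of $a\mapsto b(a)$ more carefully than the paper's terser coefficient computation.
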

\begin{proof}
We will suppose that the classes of the subvarieties $\left\{ \Omega_{a}\right\} _{a\in I_{k}}$
generate $H_{2k}(X,\Z)$, the case $A_{k}(X)$ being similar. Let
$\omega_{a}$ be the class of $\Omega_{a}$ in $N_{k}(X)$. Let $\gamma\in\Neff kX$.
By (\ref{eq:pi_k_R}) there is a class $\beta\in\nicefrac{Z_{k}(X)}{\Hom_{k}(X)}\otimes\R\subseteq H_{2k}(X,\R)$
such that $\pi_{k,\R}(\beta)=\gamma$. By (1) we have that $\beta=\sum_{a\in I_{k}}\gamma_{a}[\Omega_{a}]$
and then $\gamma=\sum\gamma_{a}\pi_{k}([\Omega_{a}])=\sum\gamma_{a}\omega_{a}$.
Let $a\in I_{k}$ and let $b\in I_{n-k}$ be as in (2). Then $\gamma\cdot\omega_{b}=\gamma_{a}\ge0$
because $\gamma$ is nef and $\omega_{b}$ is effective. Therefore
$\gamma\in\Eff kX$, then $\Neff kX\subseteq\Eff kX$. Furthermore,
from $\omega_{c}\cdot\omega_{a}=\delta_{a,c}$ it follows that the
system $\left\{ \omega_{a}\right\} _{a\in I_{k}}$ is linearly independent. Let $A$ a
subvariety of $X$ of dimension $k$, and let $B$ be a subvariety
of $X$ of codimension $k$. By Kleiman's Theorem \cite{MR0360616}
there is an element $g\in G$ such that $gA$ is rationally equivalent
to $A$ and generically transverse to $B$. Then $A\cdot B=(gA)\cdot B=\#((gA)\cap B)\ge0$,
so $\Eff kX\subseteq\Neff kX$. It is clear that $\Neff kX$ is generated
by $\left\{ \omega_{a}/a\in I_{k}\right\} $. Since $\Neff kX$ is
closed and, as seen above, generated by the $\omega_{a}$, we get
that $\Neff kX=\Effc kX$ is polyhedral.\end{proof}
\begin{prop}
\label{prop:Rat.hom.var has polyh}Let X be a rational homogeneous
variety. Then $\Neff kX=\Effc kX=\Eff kX$ is polyhedral.\end{prop}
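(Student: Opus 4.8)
The plan is to deduce Proposition \ref{prop:Rat.hom.var has polyh} as a direct application of Proposition \ref{thm:Varieties acted by G are poly}. A rational homogeneous variety $X=G/P$ comes with a transitive action of the affine group $G$, so the ambient hypothesis of that proposition is already satisfied. What remains is to exhibit, for each $k$, a finite family of $k$-dimensional subvarieties $\{\Omega_a\}_{a\in I_k}$ whose classes generate $A_k(X)$ (equivalently $H_{2k}(X,\Z)$) and which admit a dual family in complementary dimension satisfying the Kronecker-delta intersection condition $(2)$. The natural candidates are the \emph{Schubert varieties}, so the bulk of the work is to recall their basic structure and verify properties $(1)$ and $(2)$ for them.

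First I would recall the Bruhat decomposition: $X=G/P$ is a disjoint union of Schubert cells indexed by the minimal-length coset representatives $W^{\Theta}$, each cell being an affine space of dimension equal to the length $l(w)$ of its representative. The closures of these cells are the Schubert varieties $X_w$, and by the standard theory their classes form a $\Z$-basis of the integral homology (the cells give a CW structure with only even-dimensional cells, so there is no torsion and the cycle classes freely generate $H_*(X,\Z)\cong A_*(X)$). Indexing by dimension, I would set $I_k=\{w\in W^{\Theta}: l(w)=k\}$ and let $\Omega_w=X_w$; property $(1)$ is then precisely the statement that the Schubert classes of a fixed dimension span the corresponding homology group.

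The substantive point is property $(2)$, the existence of a dual basis in complementary dimension. This is where the length-additivity identities recorded just before the statement become essential. The key is \emph{Poincar\'e duality for Schubert classes}: for each $w\in W^{\Theta}$ there is a complementary representative, built using the longest element $w_0$ of $W$ and the longest element $w_\theta$ of $W_\Theta$, whose associated opposite Schubert variety meets $X_w$ transversally in a single reduced point and is disjoint from every other Schubert variety of the matching dimension. Concretely, for $w$ of length $k$ the dual index $b\in I_{n-k}$ is the representative $w_0 w w_\theta$ (or the appropriate normalization landing in $W^{\Theta}$), and the computation $\dim G/P=l(w_0)-l(w_\theta)=n$ together with the displayed identities $l(w_0 w)=l(w_0)-l(w)$ and the additivity $(\ref{eq:additivity})$ confirms that this representative indeed has the complementary length $n-k$. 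The Chevalley/duality formula for the intersection pairing on $G/P$ then yields $X_w\cdot X_{w'}^{\mathrm{opp}}=\delta_{w,w'}$, which is exactly condition $(2)$.

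The main obstacle I anticipate is making the duality statement $(2)$ precise at the level of the partial flag variety $G/P$ rather than the full flag variety $G/B$: one must pass from Schubert classes indexed by all of $W$ to those indexed by the coset space $W^{\Theta}$, and verify that the opposite Schubert variety of complementary dimension is the genuine Poincar\'e dual under the ring structure of $A_*(G/P)$. This requires the length-additivity relation $(\ref{eq:additivity})$ for $(w,\bar w)\in W^{\Theta}\times W_{\Theta}$ to guarantee that the dual representative lies in $W^{\Theta}$ and has the correct length, so that the pairing restricts correctly. Once this bookkeeping is in place, Proposition \ref{thm:Varieties acted by G are poly} applies verbatim and gives $\Neff kX=\Effc kX=\Eff kX$ polyhedral for every $k$, completing the proof.
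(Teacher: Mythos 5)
Your proposal is correct and follows essentially the same route as the paper: both reduce to Proposition \ref{thm:Varieties acted by G are poly} using Schubert varieties indexed by $W^{\Theta}$, identify the dual class via $w_{0}ww_{\theta}$, and use the length identities (\ref{eq:sum}) and (\ref{eq:additivity}) to check that this element lies in $W^{\Theta}$ with complementary length, with the Kronecker-delta pairing supplied by the standard duality of Schubert classes (the paper cites K\"ock's Proposition (1.4) for this). No gaps.
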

\begin{proof}
The description of the Chow ring of any rational homogeneous variety
given in \cite[Corollary(1.5)]{MR1092142} is 
\[
A_{*}(X)=\underset{w\in W^{\Theta}}{\bigoplus}\Z[X_{w}]
\]
where $X_{w}$ is the closure of the set $BwP/P$, with dimension
$l(w)$ \cite[Proposition(1.3)]{MR1092142}. Let $I_{k}=\{w\in W^{\Theta}/l(w)=k\}$.
Given $w\in W^{\Theta}$ we claim that $w_{0}ww_{\theta}\in I_{\dim X-k}$.
Indeed for all $s\in\Theta$, using (\ref{eq:sum}) and (\ref{eq:additivity}),
we have 
\[
\begin{array}{c}
l(w_{0}ww_{\theta}s)=l(w_{0})-l(ww_{\theta}s)=l(w_{0})-l(w)-l(w_{\theta}s)\\
=l(w_{0})-l(w)-l(w_{\theta})+l(s)=l(w_{0})-l(ww_{\theta})+1=l(w_{0}ww_{\theta})+1
\end{array}
\]
Similarly we can prove that $l(w_{0}ww_{\theta})=l(w_{0})-l(w_{\theta})-l(w)$.
Now given $w\in I_{k}$ we have, by \cite[Proposition(1.4)]{MR1092142},
that (2) of Proposition \ref{thm:Varieties acted by G are poly} is
satisfied.
\end{proof}
The pseudoeffective cone is also polyhedral in the case when the action
of $G$ on $X$ has finitely many orbits, see \cite[Corollary p.2]{MR1299008}.

Among the rational homogeneous varieties, the following are particularly
interesting.
\begin{defn}
Let $r,s$ be two integers such that $2\le r\le\frac{s}{2}$. The
Grassmann variety of $r$-planes $G(r,s)$ is the scheme of $r$-dimensional
subspaces of $\C^{s}$. Let $\omega$ be a non-degenerate symmetric
bilinear form on $\C^{s}$. The orthogonal Grassmannian of isotropic
$r$-planes $OG(r,s)$ is the scheme of $r$-dimensional subspaces
of $\C^{s}$ isotropic with respect to $\omega$. The scheme $OG(r,2m)$
has two isomorphic connected components if $r=m$ or $m-1$. In these
two cases, we will denote by $OG_{+}(r,2m)$ a connected component
of $OG(r,2m)$. Let $\sigma$ be a non-degenerate symplectic bilinear
form on $\C^{s}$. The symplectic Grassmannian of isotropic $r$-planes
$SG(r,s)$ is the scheme of $r$-dimensional subspaces of $\C^{s}$
isotropic with respect to $\sigma$.\end{defn}
\begin{rem}
\label{rmk:OGSGaszerolocusinG}Let $S$ be the universal subbundle
of $G(r,s)$. The Pl\"{u}cker embedding is the embedding given by the
very ample line bundle $\wedge^{r}S^{\dual}$. The varieties $OG(r,s)$
and $SG(r,s)$ can be embedded in $G(r,s)$ as zero sections of, respectively,
$Sym^{2}S^{\dual}$ and $\wedge^{2}S^{\dual}$.
\end{rem}

\subsubsection{Complete intersection of rational homogeneous varieties}
\begin{rem}
\label{Rmk (2) of OG(k,2k)}In \cite[Proposition 34]{MR3114934},
it is stated that the smooth complete intersection of $OG_{+}(k,2k)$
of type $(2,2)$ under the Pl\"{u}cker embedding is a weak 2-Fano variety.
This should be read as $(2)$.
\end{rem}

\begin{rem}
We introduce the following
notation: the group $H^{4}(G(r,s),\Z)$ is generated by $\{\s_{2},\s_{1,1}\}$,
while $H^{r(s-r)-4}(G(r,s),\Z)$ is generated by a basis $\{\s_{2}^{\dual},\s_{1,1}^{\dual}\}$
dual to $\{\s_{2},\s_{1,1}\}$.
\end{rem}

\begin{rem}
Let $X$ be a smooth complete intersection of $G(2,5)$ of type $(1,1)$
under the Pl\"{u}cker embedding, let $Z$ be the variety of lines through
a general point of $X$. \cite[Example 30]{MR3114934} says that $Z$
has homology class equal to $\sigma_{2}^{\v}+\sigma_{1,1}^{\v}$.
This should be read as $2\sigma_{1,1}^{\v}+\sigma_{2}^{\v}$.
\end{rem}

\begin{rem}
\label{Rmk c(O_G)}By Serre duality $\c(\Omega_{G(2,5)}^{p}(-m))=\c(\Omega_{G(2,5)}^{6-p}(m))$,
and for $m=1,2,3$ we have $\c(\Omega_{G(2,5)}(-m))=\c(\Omega_{G(2,5)}^{5}(m))=0$
because all the groups $H^{p}(G(2,5),\Omega_{G(2,5)}^{5}(m))$ are
zero by \cite[Theorem p. 171(3)]{Snow86}. If $m=1,2$ we have $\c(\Omega_{G(2,5)}^{2}(-m))=\c(\Omega_{G(2,5)}^{4}(m))=0$,
because $\forall p\ge0$ $H^{p}(G(2,5),\Omega_{G(2,5)}^{4}(m))=0$
by \cite[Theorem p.p. 165,169]{Snow86}. It can easily be seen that
$\c(\Omega_{G(2,5)})=-1$ and $\c(\Omega_{G(2,5)}^{2})=2$.
\end{rem}

\begin{lem}
\label{lem:b4(X11)=00003D2}Let $X$ be a smooth complete intersection
of type $(1,1)$ in a Grassmann variety $G(2,5)$ under the Pl\"{u}cker
embedding. Then $b_{4}(X)=2$.\end{lem}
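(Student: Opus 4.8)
The plan is to compute every Betti number of $X$ except the middle one topologically, and to pin down the middle one, $b_4(X)$, by a Hirzebruch--Riemann--Roch computation that feeds on the Euler characteristics of twisted cotangent bundles of $G=G(2,5)$ recorded in Remark~\ref{Rmk c(O_G)}.

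First I would note that $X$ is a smooth $4$-fold and, by adjunction, $-K_X=\mathcal{O}_X(3)$, so $X$ is Fano of index $3$. By the Lefschetz hyperplane theorem the restriction $H^i(G,\Q)\to H^i(X,\Q)$ is an isomorphism for $i<4$ and injective for $i=4$; combined with Poincar\'e duality on $X$ and the Betti numbers of $G(2,5)$ (all odd ones vanish and $b_0=b_2=1$), this yields $b_0=b_2=b_6=b_8=1$ and $b_1=b_3=b_5=b_7=0$. Hence $\chi_{\mathrm{top}}(X)=4+b_4(X)$, so it suffices to prove $\chi_{\mathrm{top}}(X)=6$. (In passing, injectivity for $i=4$ already gives $b_4\ge 2$ from the two Schubert classes $\sigma_2,\sigma_{1,1}$, so the content is the upper bound.)

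Next I would use $\chi_{\mathrm{top}}(X)=\sum_{p=0}^{4}(-1)^p\chi(X,\Omega_X^p)$ together with Hodge symmetry on the $4$-fold $X$ (which gives $\chi(\Omega_X^{4-p})=\chi(\Omega_X^p)$) to reduce to three holomorphic Euler characteristics:
\[
\chi_{\mathrm{top}}(X)=2\chi(\mathcal{O}_X)-2\chi(\Omega_X^1)+\chi(\Omega_X^2).
\]
Each of these I would compute by pulling back to $G$. Writing $X=Z(s)$ for a section of $E=\mathcal{O}_G(1)^{\oplus2}$, so that $N_{X/G}^\vee=\mathcal{O}_X(-1)^{\oplus2}$, the Koszul resolution $0\to\mathcal{O}_G(-2)\to\mathcal{O}_G(-1)^{\oplus2}\to\mathcal{O}_G\to\mathcal{O}_X\to0$, twisted by any locally free $F$, gives $\chi(F|_X)=\chi(F)-2\chi(F(-1))+\chi(F(-2))$. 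The conormal sequence $0\to\mathcal{O}_X(-1)^{\oplus2}\to\Omega_G^1|_X\to\Omega_X^1\to0$ then expresses $\chi(\Omega_X^1)$ through $\chi(\Omega_G^1|_X)$ and $\chi(\mathcal{O}_X(-1))$, while the standard two-step filtration of $\Omega_G^2|_X=\wedge^2(\Omega_G^1|_X)$, namely $0\to K\to\Omega_G^2|_X\to\Omega_X^2\to0$ with $0\to\mathcal{O}_X(-2)\to K\to\Omega_X^1(-1)^{\oplus2}\to0$, expresses $\chi(\Omega_X^2)$ through $\chi(\Omega_G^2|_X)$, $\chi(\mathcal{O}_X(-2))$ and $\chi(\Omega_X^1(-1))$.

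Finally I would substitute the values from Remark~\ref{Rmk c(O_G)}, namely $\c(\Omega_G^1)=-1$, $\c(\Omega_G^2)=2$, $\c(\Omega_G^1(-m))=0$ for $m=1,2,3$ and $\c(\Omega_G^2(-m))=0$ for $m=1,2$, together with the elementary facts $\c(\mathcal{O}_G)=1$ and $\c(\mathcal{O}_G(-m))=0$ for $1\le m\le4$ (Kodaira vanishing plus Serre duality, since $K_G=\mathcal{O}_G(-5)$). Because so many of these vanish, the restricted and twisted Euler characteristics above collapse to $\chi(\mathcal{O}_X)=1$, $\chi(\Omega_X^1)=-1$ and $\chi(\Omega_X^2)=2$, whence $\chi_{\mathrm{top}}(X)=2-2(-1)+2=6$ and $b_4(X)=2$. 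The main obstacle is purely organizational: setting up the filtration of $\Omega_G^2|_X$ correctly and checking that every twisted form $\Omega_G^j(-m)$ produced by the Koszul and conormal sequences (in particular $\Omega_G^1(-3)$ and the sheaves $\mathcal{O}_G(-m)$ for small $m$) is covered by the values recorded in Remark~\ref{Rmk c(O_G)}; once that bookkeeping is in place the numbers are immediate.
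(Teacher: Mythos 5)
Your proposal is correct, and its computational core --- obtaining $\chi(\O_X)$, $\chi(\Omega_X^1)$ and $\chi(\Omega_X^2)$ by restricting from $G(2,5)$ via the Koszul resolution and the conormal sequence, fed by the Euler characteristics recorded in Remark~\ref{Rmk c(O_G)} --- is exactly what the paper does. The only divergence is the final extraction of $b_4$: you assemble $\chi_{\mathrm{top}}(X)=2\chi(\O_X)-2\chi(\Omega_X^1)+\chi(\Omega_X^2)=6$ and subtract the non-middle Betti numbers supplied by the Lefschetz hyperplane theorem and Poincar\'e duality, whereas the paper invokes the statement that every row of the Hodge diamond of $X$ except the middle one coincides with that of $G(2,5)$ (together with $h^{0,4}(X)=0$ since $X$ is Fano) to convert the same Euler characteristics directly into $h^{1,3}(X)=0$ and $h^{2,2}(X)=2$. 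The paper's variant yields the individual middle Hodge numbers as a by-product, while yours gets by with slightly weaker topological input; the arithmetic and the required vanishings are identical in both, and your bookkeeping (including the two-step filtration of $\Omega_{G}^2|_X$ and the checks that $\chi(\O_X(-1))=\chi(\O_X(-2))=\chi(\Omega_X^1(-1))=0$) is sound.
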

\begin{proof}
By \cite[Example 7.1.5]{MR2095472}, all rows of the Hodge Diamond
of $X$, except the middle row, are equal to those of the Hodge Diamond
of $G=G(2,5)$. Since $X$ is Fano, $\h 04(X)=0$ then
\begin{eqnarray}
\c(\Omega_{X}) & = & -1-\h 13(X)\label{eq:-1-h13}\\
\c(\Omega_{X}^{2}) & = & \h 22(X)\label{eq:h22}\\
b_{4}(X) & = & \h 22(X)+2\h 13(X)\label{eq:betti}
\end{eqnarray}
Note that by Serre duality and adjunction formula, for any integer
$m$ 
\[
h^{4}(\O_{X}(-m))=h^{0}(\O_{X}(m)\otimes\O_{G}(2-5)_{|X})=h^{0}(\O_{X}(m-3))
\]
then by Kodaira Vanishing Theorem, $\c(\O_{X}(-1))=\c(\O_{X}(-2))=0$.
Take the Koszul resolution of the sheaf $\O_{X}$

\begin{equation}
0\ra\O_{G}(-2)\ra\O_{G}(-1)^{\oplus2}\ra\O_{G}\ra\O_{X}\ra0\label{eq:Koszul}
\end{equation}
and tensor it by $\Omega_{G}$
\begin{equation}
0\ra\Omega_{G}(-2)\ra\Omega_{G}(-1)^{\oplus2}\ra\Omega_{G}\ra\Omega_{G|X}\ra0\label{eq:successione}
\end{equation}
then, by Remark \ref{Rmk c(O_G)}, 
\[
\c(\Omega_{G|X})=\c(\Omega_{G}(-2))-2\c(\Omega_{G}(-1))+\c(\Omega_{G})=-1
\]
If we tensor (\ref{eq:successione}) by $\O_{G}(-1)$ we have 
\[
\c(\Omega_{G|X}(-1))=\c(\Omega_{G}(-3))-2\c(\Omega_{G}(-2))+\c(\Omega_{G}(-1))=0
\]
From the canonical sequence

\begin{equation}
0\ra\O_{X}(-1)^{\oplus2}\ra\Omega_{G|X}\ra\Omega_{X}\ra0\label{eq:Canonical}
\end{equation}
we get $\c(\Omega_{X})=\c(\Omega_{G|X})-2\c(\O_{X}(-1))=-1$, then
$\h 13(X)=0$ by (\ref{eq:-1-h13}). If, instead, we tensor (\ref{eq:Koszul})
by $\Omega_{G}^{2}$, that is

\[
0\ra\Omega_{G}^{2}(-2)\ra\Omega_{G}^{2}(-1)^{\oplus2}\ra\Omega_{G}^{2}\ra\Omega_{G|X}^{2}\ra0
\]
we get, by Remark \ref{Rmk c(O_G)}, 
\[
\c(\Omega_{G|X}^{2})=\c(\Omega_{G}^{2}(-2))-2\c(\Omega_{G}^{2}(-1))+\c(\Omega_{G}^{2})=2
\]
By \cite[Exercise II.5.16d]{MR0463157} and (\ref{eq:Canonical})
we get 
\[
\c(\Omega_{X}^{2})=\c(\Omega_{G|X}^{2})-2\c(\Omega_{G|X}(-1))-3\c(\O_{X}(-2))=2
\]
Then by (\ref{eq:h22}) and (\ref{eq:betti}) we get $\h 22(X)=2$
and $b_{4}(X)=2$.\end{proof}
\begin{prop}
Let $X$ be a $n$-dimensional weak 2-Fano complete intersection in
a Grassmann variety $G(r,s)$ under the Pl\"{u}cker embedding. Then, $b_{4}(X)\le2$.
In particular $\Effc 2X$ is polyhedral.\end{prop}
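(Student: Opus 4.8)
The plan is to prove the bound $b_4(X)\le 2$ and then quote Lemma \ref{lem:Lemma 1}, which turns such a bound into polyhedrality of $\Effc 2X$. I would distinguish the two cases $n\ge 5$ and $n=4$, according to whether $4$ lies strictly below or exactly at the middle dimension of $X$.

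In the case $n\ge 5$ the Pl\"{u}cker line bundle is ample, so the Lefschetz Hyperplane Theorem applied to the complete intersection $X\subseteq G:=G(r,s)$ yields an isomorphism $H^4(X,\Q)\cong H^4(G,\Q)$, since $4<n=\dim X$. Thus $b_4(X)=b_4(G)$, and it suffices to compute $b_4(G(r,s))$. Using the Schubert basis of $H^{\ast}(G,\Z)$, $b_4(G(r,s))$ is the number of partitions of weight $2$ contained in the $r\times(s-r)$ rectangle; these are precisely $(2)$ and $(1,1)$, admissible because $s-r\ge 2$ and $r\ge 2$ respectively, both of which hold since $2\le r\le s/2$. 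Hence $b_4(G)=2$ and $b_4(X)=2$.

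The remaining case $n=4$ is the genuine difficulty, because there $4$ is the middle dimension and Lefschetz only gives $H^i(X,\Q)\cong H^i(G,\Q)$ for $i<4$, so $H^4(X)$ can be strictly larger than $H^4(G)$. Here I would invoke the classification of weak $2$-Fano complete intersections in Grassmannians from \cite{MR3114934}: reducing $G(r,s)$ to dimension $4$ costs codimension $r(s-r)-4$, while the weak $2$-Fano hypothesis bounds $\sum_i d_i^2$; together these leave only finitely many $4$-dimensional examples. For each of them I would compute $b_4(X)$ exactly as in Lemma \ref{lem:b4(X11)=00003D2}: since $X$ is Fano one has $h^{0,4}(X)=0$ and hence $b_4(X)=h^{2,2}(X)+2h^{1,3}(X)$, and both $\chi(\Omega_X)$ and $\chi(\Omega_X^2)$ can be evaluated from the Koszul resolution of $\O_X$ tensored with $\Omega_G$ and $\Omega_G^2$, using Bott's theorem (in the form tabulated by Snow) to kill the cohomology of the twisted bundles on $G$, and then the conormal sequence of $X$ in $G$. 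The representative computation is the one already carried out for $X_{(1,1)}\subseteq G(2,5)$ in Lemma \ref{lem:b4(X11)=00003D2}, which gives $b_4=2$.

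The main obstacle is thus the middle-dimensional case: one has to check that the classification really produces finitely many $4$-folds and that every one of them satisfies $h^{2,2}+2h^{1,3}\le 2$, the delicate input being the vanishing of the relevant cohomology groups of homogeneous bundles on $G(r,s)$. Granting the bound $b_4(X)\le 2$ in both cases, Lemma \ref{lem:Lemma 1} shows that $\Effc 2X$ is either a half-line or spanned by two extremal rays, and in particular polyhedral.
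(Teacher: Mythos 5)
Your first case ($n\ge 5$) matches the paper: Lefschetz for ample complete intersections gives $b_{4}(X)=b_{4}(G(r,s))=2$, and Lemma \ref{lem:Lemma 1} finishes. The gap is in your treatment of the middle-dimensional case $n=4$. Your plan is to list the finitely many $4$-dimensional candidates allowed by the numerical constraints from \cite{MR3114934} and then to verify $b_{4}\le 2$ for each one by a Hodge-theoretic computation as in Lemma \ref{lem:b4(X11)=00003D2}. That verification would fail: several of the candidates have large middle Betti number. For instance the type $(1,2)$ section of $G(2,5)$ is a Gushel--Mukai fourfold with $b_{4}=24$, and the types $(1,3)$ and $(2,2)$ in $G(2,5)$ are similarly far from satisfying $b_{4}\le 2$. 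Even the one candidate for which the computation does give $b_{4}=2$, namely the type $(1,1)$ section of $G(2,5)$ treated in Lemma \ref{lem:b4(X11)=00003D2}, is irrelevant to the proposition because it is \emph{not} weak 2-Fano --- that is precisely the content of Theorem \ref{thm:Gen De Arruda}.

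The missing idea is that the $n=4$ case is vacuous: the numerical constraints ($c=r(s-r)-4$ and the bound on $\sum d_{i}$ coming from \cite[Proposition 31]{MR3114934}) leave only the eight candidates in the paper's table, and \emph{none} of them is weak 2-Fano, by \cite[Propositions 31 and 32(iv)]{MR3114934} together with Theorem \ref{thm:Gen De Arruda} (which is needed exactly to rule out the $(1,1)$ sections of $G(2,5)$ and $G(2,6)$ left open by Araujo--Castravet). So no Betti number of a $4$-fold ever needs to be computed in this proposition; Lemma \ref{lem:b4(X11)=00003D2} serves only as an ingredient in the proof of Theorem \ref{thm:Gen De Arruda}, not as a verification of the bound $b_{4}\le 2$ for an admissible $X$. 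As written, your argument cannot be completed along the route you propose; it must be redirected to an exclusion argument in dimension $4$.
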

\begin{proof}
Assume that $X$ is of type $(d_{1},...,d_{c})$. If $n>4$, by \cite[Theorem 7.1.1]{MR2095472},
we have $b_{4}(X)=b_{4}(G(r,s))\le2$ and we can apply Lemma \ref{lem:Lemma 1}.
If $n=4$, using \cite[Proposition 31]{MR3114934}, we have the following
conditions: $c=r(s-r)-4$ and $\sum_{i=1}^{c}d_{i}\le s-1$. It is
easy to see that this leads to the following cases

\begin{table}[H]
\centering{}%
\begin{tabular}{|c|c|c|c|c|}
\hline 
$G(r,s)$ & Type &  & $G(r,s)$ & Type\tabularnewline
\hline 
\hline 
$G(2,7)$ & $(1,1,1,1,1,1)$ & \multirow{4}{*}{} & \multirow{4}{*}{$G(2,5)$} & $(1,1)$\tabularnewline
\cline{1-2} \cline{5-5} 
$G(3,6)$ & $(1,1,1,1,1)$ &  &  & $(1,2)$\tabularnewline
\cline{1-2} \cline{5-5} 
\multirow{2}{*}{$G(2,6)$} & $(1,1,1,1)$ &  &  & $(1,3)$\tabularnewline
\cline{2-2} \cline{5-5} 
 & $(1,1,1,2)$ &  &  & $(2,2)$\tabularnewline
\hline 
\end{tabular}
\end{table}

None of them is weak 2-Fano by \cite[Proposition 31 and 32(iv)]{MR3114934},
and Theorem \ref{thm:Gen De Arruda}.
\end{proof}
Now we can prove Theorem \ref{thm:Gen De Arruda}.
\begin{proof}
Let $\O_{Y}(1)$ be the Pl\"ucker line bundle and let 
\[
\U\subseteq\bP(H^{0}(Y,\O_{Y}(1)))\times\bP(H^{0}(Y,\O_{Y}(1)))
\]
be the open set parametrizing the smooth complete intersections in
$Y$ of bidegree $(1,1)$. For $t\in\U$, we denote by $X_{t}$ the
corresponding variety. Let $\mathcal{X}:=\{(x,t)\in Y\times\U:x\in X_{t}\}$
and consider the family
\[
\xymatrix{\mathcal{X}\ar[d]^{pr_{2}}\ar[r]^{pr_{1}} & Y\\
\mathcal{U}
}
\]
Suppose $Y=G(2,5)$. Let $i:X_{t}\ra Y$ be the inclusion, the map
$i^{*}:H^{4}(Y,\Z)\ra H^{4}(X_{t},\Z)$ is injective with torsion
free cokernel by \cite[Theorem 7.1.1 and Example 7.1.2]{MR2095472},
since $b_{4}(Y)=b_{4}(X_{t})=2$ by Lemma \ref{lem:b4(X11)=00003D2},
we have that $i^{*}:H^{4}(Y,\Z)\ra H^{4}(X_{t},\Z)$ is an isomorphism.
By \cite[Corollary 5.1]{DeArruda}, for a general $t$ there
exists a surface $S_{t}$ such that $[i(S_{t})]=\sigma_{1,1}^{\v}$.
Then there exist $a_{t},b_{t}\in\Z$ such that $S_{t}=a_{t}\sigma_{2|X_{t}}+b_{t}\sigma_{1,1|X_{t}}$.
Since
\begin{eqnarray*}
(\sigma_{2|X_{t}})^{2} & = & (\sigma_{2}^{2})\cdot\sigma_{1}^{2}=(\sigma_{3,1}+\sigma_{2,2})\cdot\sigma_{1}^{2}=2\\
(\sigma_{1,1|X_{t}})^{2} & = & (\sigma_{1,1}^{2})\cdot\sigma_{1}^{2}=\sigma_{2,2}\cdot\sigma_{1}^{2}=1\\
\sigma_{2|X_{t}}\cdot\sigma_{1,1|X_{t}} & = & (\sigma_{2}\cdot\sigma_{1,1})\cdot\sigma_{1}^{2}=\s_{3,1}\cdot\sigma_{1}^{2}=1
\end{eqnarray*}
Using the condition $[i(S_{t})]=\sigma_{1,1}^{\v}=\sigma_{2,2}$,
we have
\[
\begin{array}{c}
0=\sigma_{2,2}\cdot\sigma_{2}=S_{t}\cdot\sigma_{2|X_{t}}=2a_{t}+b_{t}\\
1=\sigma_{2,2}\cdot\sigma_{1,1}=S_{t}\cdot\sigma_{1,1|X_{t}}=a_{t}+b_{t}
\end{array}
\]
then $a_{t}=-1$ and $b_{t}=2$. Let $\mathcal{S}:=pr_{1}^{*}(-\sigma_{2}+2\sigma_{1,1})$,
then the surface $\mathcal{S}_{|X_{t}}$ is such that $[S_{t}]=[\mathcal{S}_{|X_{t}}]$,
and since we see that it is effective for a general $t$, hence it
is effective for all%
\footnote{This is a well-known fact for experts. A good reference is \cite[Proposition 3]{Ottem:inpress}.%
} $t$. Let $t\in\U$, then $X_{t}$ is not weak 2-Fano since using
\cite[Proposition 32]{MR3114934}

\[
ch_{2}(X_{t})\cdot\mathcal{S}_{|X_{t}}=\frac{1}{2}(\sigma_{2|X_{t}}-\sigma_{1,1|X_{t}})\cdot(-\sigma_{2|X_{t}}+2\sigma_{1,1|X_{t}})=-\frac{1}{2}.
\]
Suppose $Y=G(2,6)$. By \cite[Theorem 7.1.1]{MR2095472} we have that
$H^{4}(Y,\Z)\cong H^{4}(X_{t},\Z)$, then $b_{8}(X_{t})=b_{4}(X_{t})=2$.
Now consider $i^{*}:H^{8}(Y,\Z)\ra H^{8}(X_{t},\Z)$, where $i:X_{t}\ra Y$
is the inclusion. From

\begin{eqnarray*}
\s_{4|X_{t}}\cdot\s_{2|X_{t}} & = & (\s_{4}\cdot\s_{2})\cdot\s_{1}^{2}=\s_{4,2}\cdot\s_{1}^{2}=\s_{4,3}\cdot\s_{1}=1\\
\s_{2,2|X_{t}}\cdot\s_{2|X_{t}} & = & (\s_{2,2}\cdot\s_{2})\cdot\s_{1}^{2}=\s_{4,2}\cdot\s_{1}^{2}=\s_{4,3}\cdot\s_{1}=1\\
\s_{4|X_{t}}\cdot\s_{1,1|X_{t}} & = & (\s_{4}\cdot\s_{1,1})\cdot\s_{1}^{2}=0\cdot\s_{1}^{2}=0\\
\s_{2,2|X_{t}}\cdot\s_{1,1|X_{t}} & = & (\s_{2,2}\cdot\s_{1,1})\cdot\s_{1}^{2}=(\s_{2,2}\cdot(\s_{1}^{2}-\s_{2}))\cdot\s_{1}^{2}\\
 & = & (\s_{2,2}\cdot\s_{1}^{2}-\s_{2,2}\cdot\s_{2})\cdot\s_{1}^{2}=(\s_{3,2}\cdot\s_{1}-\s_{4,2})\cdot\s_{1}^{2}\\
 & = & (\s_{4,2}+\s_{3,3}-\s_{4,2})\cdot\s_{1}^{2}=(\s_{3,3})\cdot\s_{1}^{2}=\s_{4,3}\cdot\s_{1}=1
\end{eqnarray*}
Hence $\sigma_{4|X_{t}}$ and $\sigma_{2,2|X_{t}}$ are a basis of
$H^{8}(X_{t},\Z)$, since that group is torsion free (see Remark \ref{rmk:h5istorsionfree}).
Then $[S_{t}]=a_{t}\sigma_{4|X_{t}}+b_{t}\sigma_{2,2|X_{t}}$, where
as before $S_{t}$ is the surface described in \cite[Corollary 5.1]{DeArruda}
for general $t\in\mathcal{U}$. Using the condition $[i(S_{t})]=\sigma_{1,1}^{\v}=\sigma_{3,3}$,
we have
\[
\begin{array}{c}
0=\sigma_{3,3}\cdot\sigma_{2}=S_{t}\cdot\sigma_{2|X_{t}}=a_{t}+b_{t}\\
1=\sigma_{3,3}\cdot\sigma_{1,1}=S_{t}\cdot\sigma_{1,1|X_{t}}=b_{t}
\end{array}
\]
then $a_{t}=-1$ and $b_{t}=1$. Let $\mathcal{S}:=pr_{1}^{*}(-\sigma_{4}+\sigma_{2,2})$,
then $[S_{t}]=[\mathcal{S}_{|X_{t}}]$, that is $\mathcal{S}_{|X_{t}}$
is effective for all $t$. Let $t\in\U$, then $X_{t}$ is not weak
2-Fano since using \cite[Proposition 32]{MR3114934}

\[
ch_{2}(X_{t})\cdot\mathcal{S}_{|X_{t}}=(\sigma_{2|X_{t}}-\sigma_{1,1|X_{t}})\cdot(-\sigma_{4|X_{t}}+\sigma_{2,2|X_{t}})=-1.
\]

\end{proof}
\begin{rem}
\label{rmk:h5istorsionfree}By \cite[Theorem 7.1.1]{MR2095472} we have $H^{5}(X_{t},\Z)=0$. By \cite[Corollary 3.3]{MR1867354}
$H_{4}(X_{t},\Z)$ is torsion free, then also $H^{8}(X_{t},\Z)$ is
torsion free by Poincar\'e duality.
\end{rem}
We now deal with complete intersections in orthogonal Grassmannians,
so let us recall the useful notation in \cite{Cos09}. Given a connected
component $X\subseteq OG(r,s)$, we will write $s=2m+1-\epsilon$
with $\epsilon\in\left\{ 0,1\right\} $ and $2\le r\le m$. Let $t$
be an integer such that $0\le t\le r$, and $t\equiv m$ $(mod\,2)$
if $2r=s$. Given a sequence of integers $\lambda=(\lambda_{1},...,\lambda_{t})$
of length $t$ such that 
\[
m-\e\ge\lambda_{1}>...>\lambda_{t}>-\e.
\]
Let $\tilde{\lambda}=(\tilde{\lambda}_{t+1},...,\tilde{\lambda}_{m})$
be the unique sequence of length $m-t$ such that
\begin{itemize}
\item $m-1\ge\tilde{\lambda}_{t+1}>...>\tilde{\lambda}_{m}\ge0$,
\item $\tilde{\lambda}_{j}+\lambda_{i}\ne m-\e$ for every $i=1,..,t$ and
$j=t+1,...,m$.
\end{itemize}
The Schubert varieties in $X$ are parametrized by pairs $(\lambda,\mu)$,
where $\mu$ is any subsequence of $\tilde{\lambda}$ of length $r-t$.
Given an isotropic flag of subvector spaces $F_{\bullet}$

\[
0\subseteq F_{1}\subseteq F_{2}\subseteq...\subseteq F_{m}\subseteq F_{m-1}^{\perp}\subseteq F_{m-2}^{\perp}\subseteq...\subseteq F_{1}^{\perp}\subseteq\C^{s},
\]
$\Omega_{(\lambda,\mu)}(F_{\bullet})$ is defined as the closure of
the locus

\[
\begin{array}{c}
\left\{ \left[W\right]\in X/\dim(W\cap F_{m+1-\e-\lambda_{i}})=i\,\mathrm{for\,1\le i\le t};\right.\\
\left.\dim(W\cap F_{\mu_{j}}^{\perp})=j\,\mathrm{for\,}t<j\le r\right\} .
\end{array}
\]
Let us define another sequence $\lambda'$ of length $t'$ in this
way:
\begin{itemize}
\item $\lambda'=\lambda$ if either $\epsilon=0$ or $\epsilon=1$ and $t\equiv m$
$(mod\,2)$; otherwise
\item $\lambda'=\lambda\cup\{b\}$ where $b=\min\{a\in\N/0\le a\le m-1,\, a\notin\lambda,\, a+\mu_{j}\ne m-1\,\forall j=t+1,...,k\}$.
\end{itemize}
Let $\tilde{\lambda'}$ be the unique sequence associated to $\lambda'$
as above. Then the pair $(\lambda,\mu)$ is a subsequence of $(\lambda',\tilde{\lambda'})$.
Suppose $(\lambda,\mu)=({\lambda}'_{i_{1}},...,{\lambda'}_{i_{t}},\tilde{\lambda'}_{i_{t+1}},...,\tilde{\lambda'}_{i_{r}})$
and let the discrepancy of $\lambda$ and $\mu$ be the non-negative
number 
\[
dis(\lambda,\mu)=\sum_{j=1}^{r}(m-r+j-i_{j}).
\]
Then the codimension of a Schubert cycle $\Omega_{(\lambda,\mu)}(F_{\bullet})$
is (see \cite[p.2448]{Cos09}) 
\[
\mathrm{codim}(\Omega_{(\lambda,\mu)}(F_{\bullet}))=\sum_{i=1}^{t'}\lambda'_{i}+dis(\lambda,\mu).
\]
Let $\Omega_{(\lambda,\mu)}(F_{\bullet})$ be of codimension $k$
and set $\sigma_{(\lambda,\mu)}=\left[\Omega_{(\lambda,\mu)}(F_{\bullet})\right]\in H^{2k}(X,\Z)$.
The set of all $\sigma_{(\lambda,\mu)}$ of codimension $k$ is a
basis of $H^{2k}(X,\Z)$ (by the Ehresmann's Theorem \cite{Ehr34}).
\begin{lem}
\label{lem:Betti OG}Let $X$ be a connected component of $OG(r,s)$,
$2\le r\le m=\left[\frac{s}{2}\right]$, we have
\[
b_{4}(X)=\begin{cases}
1 & r=m\\
3 & 1\le m-r\le2,\, s\, even\\
2 & otherwise
\end{cases}
\]
\end{lem}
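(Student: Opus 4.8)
The plan is to compute $b_4(X)$ as the rank of $H^4(X,\Z)$, which by the Ehresmann/Chevalley description recalled above equals the number of Schubert classes $\sigma_{(\lambda,\mu)}$ of codimension $2$. Since $X$ is a connected component of $OG(r,s)$ with $s = 2m+1-\e$ and $2 \le r \le m$, the task reduces to a purely combinatorial enumeration: count the admissible pairs $(\lambda,\mu)$ whose codimension $\sum_{i=1}^{t'}\lambda'_i + dis(\lambda,\mu)$ equals $2$. First I would set up the enumeration by splitting on the value of $\sum_i \lambda'_i$ and of $dis(\lambda,\mu)$, the only two ways to reach total codimension $2$: either the $\lambda'$-part contributes $2$ and the discrepancy $0$, or the $\lambda'$-part contributes $0$ or $1$ with the discrepancy making up the rest.

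First I would treat the extremal case $r = m$. Here the strict-decrease constraint $m-\e \ge \lambda_1 > \cdots > \lambda_t > -\e$ together with the parity condition $t \equiv m \pmod 2$ (forced when $2r=s$, which holds since $s = 2m$ or $2m+1$ with $r=m$) sharply limits the sequences; I expect exactly one codimension-$2$ class to survive, giving $b_4(X) = 1$. Next I would handle the generic case $m - r \ge 3$ (or the remaining low-$m-r$ cases with $s$ odd): here there is enough room in both the $\lambda$-sequence and the $\mu$-subsequence that the count stabilizes at $2$, matching the Betti number of an ordinary Grassmannian in low degrees (namely the two partitions $\sigma_{2}$ and $\sigma_{1,1}$ of $2$ survive). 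The delicate middle case is $1 \le m-r \le 2$ with $s$ even: here the two isomorphic components of $OG(m,2m)$ and $OG(m-1,2m)$ introduce the auxiliary sequence $\lambda'$ and the parity rule $t \equiv m \pmod 2$, and I would carefully track how the extra element $b$ appended in the definition of $\lambda'$ (when $\e = 1$ fails but we are in the even case, i.e. $\e=0$ with $t \not\equiv m$) alters the codimension formula. The expectation is that exactly three admissible pairs of codimension $2$ arise, yielding $b_4(X) = 3$.

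The main obstacle, and the step I would be most careful with, is the \emph{correct bookkeeping of the discrepancy term} $dis(\lambda,\mu) = \sum_{j=1}^{r}(m-r+j-i_j)$ together with the subsequence-embedding of $(\lambda,\mu)$ into $(\lambda',\tilde{\lambda'})$. The subtlety is that $\mu$ ranges over subsequences of $\tilde\lambda$, so low-codimension classes can arise not only from small $\lambda'_i$ but also from $\mu$ sitting in a non-initial position within $\tilde{\lambda'}$, contributing positively to the discrepancy. I would therefore organize the count by first fixing the multiset of values $\{\lambda'_i\}$ contributing to $\sum \lambda'_i$, then enumerating the allowable positions $i_j$ compatible with a discrepancy summing to $2 - \sum\lambda'_i$, and finally checking the parity constraint $t \equiv m \pmod 2$ and the isotropy/incompatibility conditions $\tilde\lambda_j + \lambda_i \ne m - \e$. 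Once these enumerations are carried out case by case, the three stated values of $b_4(X)$ follow by direct tabulation; a useful sanity check throughout is Poincaré duality $b_4(X) = b_{2\dim X - 4}(X)$ and comparison with the known Betti numbers of $G(r,s)$ in the unconstrained regime $m - r$ large.
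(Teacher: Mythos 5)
Your overall strategy is exactly the paper's: identify $b_4(X)$ with the number of Schubert classes $\sigma_{(\lambda,\mu)}$ of codimension $2$ and enumerate the admissible pairs by splitting the equation $\sum_{i}\lambda'_i+dis(\lambda,\mu)=2$ into the cases $(0,2)$, $(1,1)$, $(2,0)$. The problem is that the enumeration itself --- which is the entire mathematical content of this lemma --- is never carried out. Every case ends with ``I expect exactly one class to survive'', ``the count stabilizes at $2$'', ``the expectation is that exactly three admissible pairs arise''. These are restatements of the answer, not derivations. In particular the genuinely delicate part, namely showing that for $s$ even each of the values $m-r=1$ and $m-r=2$ produces exactly one \emph{extra} class beyond the generic two (and that the mechanisms differ according to the parity of $m$, via which element $b$ gets appended to form $\lambda'$ and which positions $i_j$ are forced by the conditions $c_j=m-r+j-i_j\ge 0$), is precisely what you defer to ``direct tabulation''. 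Without that tabulation there is no proof.

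Two concrete errors in your setup would also derail the count if you tried to execute it. First, you assert that the parity condition $t\equiv m\pmod 2$ is forced when $r=m$ ``since $s=2m$ or $2m+1$''; but the condition only applies when $2r=s$, which fails for $s=2m+1$, $r=m$. For $s$ odd and $r=m$ the correct mechanism is different: the bound $c_j\le m-r=0$ kills all discrepancy, so only the case $\sum\lambda_i=2$, $\lambda=(2)$ survives, with no parity constraint needed. Second, you describe the appending of $b$ to form $\lambda'$ as occurring when ``$\e=0$ with $t\not\equiv m$''; in the paper's convention $s=2m+1-\e$, so $\e=0$ is the \emph{odd} case where $\lambda'=\lambda$ always, and the appending occurs only when $\e=1$ (i.e.\ $s$ even) and $t\not\equiv m\pmod 2$. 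Since the whole $b_4=3$ phenomenon for $s$ even hinges on tracking exactly when $\lambda'\ne\lambda$, getting this convention backwards is not a cosmetic slip. The Poincar\'e-duality sanity check you propose is fine but cannot substitute for the case analysis.
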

\begin{proof}
We have to count the number of sequences $(\lambda,\mu)$ such that
\[
\sum_{i=1}^{t'}\lambda'_{i}+dis(\lambda,\mu)=2.
\]
For $1\le j\le r$ let $c_{j}=m-r+j-i_{j}$. It can easily be seen
that 
\[
m-r\ge c_{1}\ge c_{2}\ge....\ge c_{r}\ge0
\]
and we can write 
\[
dis(\lambda,\mu)=\sum_{i=1}^{r}c_{j}.
\]
We are in one of the following cases:
\begin{enumerate}
\item $\sum_{i=1}^{t'}\lambda'_{i}=0$ and $dis(\lambda,\mu)=2$, or
\item $\sum_{i=1}^{t'}\lambda'_{i}=1$ and $dis(\lambda,\mu)=1$, or
\item $\sum_{i=1}^{t'}\lambda'_{i}=2$ and $dis(\lambda,\mu)=0$.
\end{enumerate}
Let $s$ be odd. Then

\begin{itemize}

\item [Case (1)] $t$ must be 0. If $m-r\ge1$ then $c_{1}=c_{2}=1$,
and, if $m-r>1$,\\
we have also the possibility $c_{1}=2$. These cases correspond to
\[
(\lambda,\mu)=\begin{cases}
(\emptyset,(r,r-1,r-3,...,))\\
(\emptyset,(r+1,r-2,r-3,...,)).
\end{cases}
\]

\item [Case (2)] Only one possibility if $m-r=1$, that is $\lambda=(1)$
and $c_{2}=1$.\\
This case corresponds to $(\lambda,\mu)=((1),(r-2,r-3,...,))$. No
other possibilities if $m-r\ne1$.

\item [Case (3)] It must be $\lambda=(2)$, then $i_{1}=1$ and since
$c_{j}=0$ $\forall j\ge1$, $c_{1}=m-r+1-1=0$ implies $m=r$. This
is the case $(\lambda,\mu)=((2),(m-1,m-3,...))$.\end{itemize}

Let $s$ be even. If $s=2r$, then the discrepancy is 0 because $c_{j}\le m-r$
$\forall j\ge1$, then it is possible only the case 3, that is 
\[
(\lambda,\mu)=\begin{cases}
((2),(m-1,m-2,m-4)) & m\,\mathrm{odd}\\
((2,0),(m-2,m-4)) & m\,\mathrm{even.}
\end{cases}
\]
Suppose $m>r$. Let $m$ be even, then

\begin{itemize}

\item[Case (1)] It must be $\lambda'=\emptyset$, then $\lambda=\lambda'=\emptyset$
and $\tilde{\lambda}=(m-1,m-2,m-3,m-4,...)$. If $m-r\ge1$ then $c_{1}=c_{2}=1$,
and, if $m-r\ge2$, we have also the possibility $c_{1}=2$. These
cases corresponds to 
\[
(\lambda,\mu)=\begin{cases}
(\emptyset,(r,r-1,r-3,...,))\\
(\emptyset,(r+1,r-2,r-3,...,)).
\end{cases}
\]

\item[Case (2)] It must be $\lambda'=(1,0)$, then we can have $\lambda=(0)$
or $\lambda=(1)$.

Suppose $\lambda=(0)$, $\tilde{\lambda}=(m-2,m-3,...)$, and we have
to choose a $\mu$ such that $b=1$ in order to have $\lambda'=\lambda\cup\{1\}$
which implies $\tilde{\lambda}'=(m-3,m-4,...)$. This can happen only
if $m-2\notin\mu$, that is, it is enough to choose $\mu$ as a subsequence
of $(m-3,m-4,...)$. This case implies that $i_{1}=2$, then $c_{1}=m-r+1-2=m-r-1$,
then it must be $m-r=2$. Since $c_{j}=0$ $\forall j\ge2$, that
corresponds to the case 
\[
(\lambda,\mu)=((0),(m-4,m-5,...,)).
\]

Suppose $\lambda=(1)$, $\tilde{\lambda}=(m-1,m-3,...)$, and we have
to choose a $\mu$ such that $b=0$ in order to have $\lambda'=\lambda\cup\{0\}$
which implies $\tilde{\lambda}'=(m-3,m-4,...)$. This can happen only
if $m-1\notin\mu$, that is, it is enough to choose $\mu$ as a subsequence
of $(m-3,m-4,...)$. This case implies that $i_{1}=1$, then $c_{1}=m-r+1-1=m-r$,
then it must be $m-r=1$. Since $c_{j}=0$ $\forall j\ge2$, that
corresponds to the case 
\[
(\lambda,\mu)=((1),(m-3,m-4,m-5,...,)).
\]

\item[Case (3)] It must be $\lambda'=(2,0)$, then we can have $\lambda=(0)$
or $\lambda=(2)$.

If $\lambda=(2)$, then $c_{1}=m-r$, then the discrepancy is not
0.

So $\lambda=(0)$, $\tilde{\lambda}=(m-2,m-3,...)$, $c_{j}=0$ $\forall j\ge1$,
and we have to choose a $\mu$ such that $b=2$ in order to have $\lambda'=\lambda\cup\{2\}$
which implies $\tilde{\lambda}'=(m-2,m-4,...)$. This can happen only
if $m-2\in\mu$ and $m-3\notin\mu$. That is, the sequence 
\[
((0),\mu)=((0),(\tilde{\lambda'}_{i_{1}},...,\tilde{\lambda'}_{i_{r}}))
\]
seen as a subsequence of $((2,0),(m-2,m-4,...))=(\lambda',\tilde{\lambda'})$
must satisfy $i_{1}=2$. The condition $c_{j}=0$ implies $i_{j}=m-r+j$,
then $i_{1}=m-r+1=2$ implies $m-r=1$. Then, if $m-r=1$, we have
the sequence 
\[
(\lambda,\mu)=((0),(m-2,m-4,...)).
\]

\end{itemize}

Let $m$ be odd, then

\begin{itemize}

\item[Case (1)] It must be $\lambda'=(0)$, then we can have $\lambda=\lambda'=(0)$
or $\lambda=\emptyset$.

Suppose $\lambda=\lambda'=(0)$, this implies $\tilde{\lambda}=(m-2,m-3,m-4,...)$
and $c_{1}=m-r$. Then

-if $m-r\ge3$, then this case in not possible since the first summand
of the discrepancy (which it must be 2) is $m-r$,

-if $m-r=2$, then $c_{j}=0$ for $j\ge2$, that is $i_{j}=m-r+j$
for $j\ge2$, then 
\[
(\lambda,\mu)=((0),(\tilde{\lambda}_{m-r+2},\tilde{\lambda}_{m-r+3},...,))=((0),(r-2,r-3,...)),
\]

-if $m-r=1$, then $c_{j}=0$ for $j\ge3$ and $c_{2}=1$, that is
\[
(\lambda,\mu)=((0),(\tilde{\lambda}_{m-r+1},\tilde{\lambda}_{m-r+3},...,))=((0),(r-1,r-3,...)).
\]

Suppose $\lambda=\emptyset$, $\tilde{\lambda}=(m-1,m-2,...)$, and
we have to choose a $\mu$ such that $b=0$ in order to have $\lambda'=\lambda\cup\{0\}$
which implies $\tilde{\lambda}'=(m-2,m-3,m-4,...)$. This can happen
only if $m-1\notin\mu$, that is, it is enough to choose $\mu$ as
a subsequence of $(m-2,m-3,m-4,...)$. If $m-r\ge1$ we have $c_{1}=c_{2}=1$,
that corresponds to the case 
\[
(\lambda,\mu)=(\emptyset,(r,r-1,r-3,...,)).
\]
But, in order to make $m-1\notin\mu$, we must have $r\neq m-1$,
then this case only happen if $m-r\ge2$. If $m-r\ge2$, we have also
the possibility $c_{1}=2$, that corresponds to the case 
\[
(\lambda,\mu)=(\emptyset,(r+1,r-2,r-3,...,)).
\]
But, in order to make $m-1\notin\mu$, $r+1\neq m-1$, then this case
only happen if $m-r\ge3$.

\item[Case (2)] It must be $\lambda'=(1)$, then we can have $\lambda=\lambda'=(1)$
or $\lambda=\emptyset$. 

Suppose $\lambda=\lambda'=(1)$, then $\tilde{\lambda}=(m-1,m-3,m-4,...)$,
$c_{1}=m-r$, and $c_{j}=0$ for $j\ge2$. So, if $m-r=1$, we have
the sequence 
\[
(\lambda,\mu)=((1),(\tilde{\lambda}_{m-r+2},\tilde{\lambda}_{m-r+3},...,))=((1),(m-3,m-4,...)).
\]

Suppose $\lambda=\emptyset$, $\tilde{\lambda}=(m-1,m-2,...)$, $c_{1}=1$,
$c_{j}=0$ $\forall j\ge2$, and we have to choose a $\mu$ such that
$b=1$ in order to have $\lambda'=\lambda\cup\{1\}$ which implies
\[
\tilde{\lambda}'=(m-1,m-3,m-4,...).
\]
This can happen only if $m-1\in\mu$ and $m-2\notin\mu$. That is,
the sequence 
\[
(\emptyset,\mu)=(\emptyset,(\tilde{\lambda'}_{i_{1}},...,\tilde{\lambda'}_{i_{r}}))
\]
seen as a subsequence of 
\[
((1),(m-1,m-3,m-4,...))=(\lambda',\tilde{\lambda'})
\]
must satisfy $i_{1}=2$. The condition $c_{1}=1$ implies $1=m-r+1-i_{1}$,
then $1=m-r+1-2$ that is $m-r=2$, while the condition $c_{j}=0\,\forall j\ge2$
implies $i_{j}=m-r+j$. Then, if $m-r=2$, we have the sequence 
\[
(\lambda,\mu)=((\emptyset),(m-1,m-4,m-5,...)).
\]

\item[Case (3)] It must be $\lambda'=(2)$, then we can have $\lambda=\lambda'=(1)$
or $\lambda=\emptyset$.

If $\lambda=(2)$, then $c_{1}=m-r$, then the discrepancy is not
0.

So $\lambda=\emptyset$, $\tilde{\lambda}=(m-1,m-2,...)$, $c_{j}=0$
$\forall j\ge1$, and we have to choose a $\mu$ such that $b=2$
in order to have $\lambda'=\lambda\cup\{2\}$ which implies 
\[
\tilde{\lambda}'=(m-1,m-2,m-4,...).
\]
This can happen only if $m-1,m-2\in\mu$ and $m-3\notin\mu$. That
is, the sequence 
\[
(\emptyset,\mu)=(\emptyset,(\tilde{\lambda'}_{i_{1}},...,\tilde{\lambda'}_{i_{r}}))
\]
seen as a subsequence of 
\[
((2),(m-1,m-2,m-4,...))=(\lambda',\tilde{\lambda'})
\]
must satisfy $i_{1}=2$ and $i_{2}=3$. The condition $c_{j}=0$ implies
$i_{j}=m-r+j$, then $i_{1}=m-r+1=2$ and $i_{2}=m-r+2=3$ imply $m-r=1$.
Then, if $m-r=1$, we have the sequence 
\[
(\lambda,\mu)=((\emptyset),(m-1,m-2,m-4,...)).
\]

\end{itemize}\end{proof}
\begin{lem}
\label{lem:b6OG(r,2r)}$b_{6}(OG_{+}(r,2r))=2$.\end{lem}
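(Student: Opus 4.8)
The plan is to reuse verbatim the Schubert-class bookkeeping developed in the proof of Lemma \ref{lem:Betti OG}, now counting basis classes of codimension $3$ rather than $2$: since the $\sigma_{(\lambda,\mu)}$ of codimension $k$ form a basis of $H^{2k}(X,\Z)$, the number $b_{6}(OG_{+}(r,2r))$ equals the number of pairs $(\lambda,\mu)$ with $codim(\Omega_{(\lambda,\mu)}(F_{\bullet}))=3$. First I would specialize the parameters. For $X=OG_{+}(r,2r)$ we have $s=2r$, hence $m=\lfloor s/2\rfloor=r$ and $\epsilon=1$. The decisive simplification is that $m-r=0$: from $m-r\ge c_{1}\ge\cdots\ge c_{r}\ge0$ every $c_{j}$ must vanish, so $dis(\lambda,\mu)=0$ and the codimension formula collapses to $codim(\Omega_{(\lambda,\mu)}(F_{\bullet}))=\sum_{i=1}^{t'}\lambda'_{i}$. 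Moreover, because $\epsilon=1$ and the constraint $t\equiv m\pmod 2$ is forced when $2r=s$, we are in the case $\lambda'=\lambda$, so $t'=t$ and the codimension is simply $\sum_{i=1}^{t}\lambda_{i}$. Finally, $c_{j}=0$ forces $i_{j}=m-r+j=j$ for all $j$, so $(\lambda,\mu)$ is the full pair $(\lambda',\tilde\lambda')=(\lambda,\tilde\lambda)$; thus once $\lambda$ is fixed, $\mu=\tilde\lambda$ is uniquely determined, and codimension-$3$ classes are in bijection with strictly decreasing sequences $r-1\ge\lambda_{1}>\cdots>\lambda_{t}\ge0$ of length $t\equiv r\pmod 2$ with $\sum_{i}\lambda_{i}=3$.

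Next I would enumerate these sequences. The strict partitions of $3$ into nonnegative parts are $(3)$ and $(2,1,0)$ of odd length, together with $(2,1)$ and $(3,0)$ of even length; there is nothing of length $\ge4$ since $0+1+2+3=6>3$. Imposing $t\equiv r\pmod 2$ and $\lambda_{1}\le r-1$ then leaves exactly two sequences in each parity class: for $r$ even the pair $(2,1),(3,0)$, and for $r$ odd the pair $(3),(2,1,0)$. In either case this produces exactly two codimension-$3$ Schubert classes, so by Ehresmann's Theorem $b_{6}(OG_{+}(r,2r))=2$.

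The only genuine subtlety, and the step I would verify most carefully, is the parity-and-range bookkeeping together with the absence of overcounting. Overcounting is excluded by the earlier observation that $\mu=\tilde\lambda$ is forced, so distinct classes correspond to distinct $\lambda$. The enumeration itself is valid precisely once $r$ is large enough that $\lambda_{1}=3$ is admissible and the length-$3$ sequence fits, i.e. for $r\ge4$ (where $\dim OG_{+}(r,2r)=\binom{r}{2}\ge6$); for the degenerate small spinor varieties $OG_{+}(2,4)\cong\mathbb{P}^{1}$ and $OG_{+}(3,6)\cong\mathbb{P}^{3}$ one instead has $b_{6}=0$ and $b_{6}=1$, so the statement is to be read in the range $r\ge4$. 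With the vanishing of the discrepancy, the determination of $\mu$, and the parity constraint all checked, the count is complete.
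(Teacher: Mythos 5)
Your proof is correct and follows essentially the same route as the paper's: after noting that for $s=2r$ the discrepancy vanishes and $\mu=\tilde\lambda$ is forced, both arguments reduce to counting strict sequences $r-1\ge\lambda_{1}>\cdots>\lambda_{t}\ge0$ with $\sum_{i}\lambda_{i}=3$ and $t\equiv r\pmod 2$, finding $(3),(2,1,0)$ for $r$ odd and $(2,1),(3,0)$ for $r$ even. Your added caveat that the count requires $r\ge4$ (the statement fails for $OG_{+}(2,4)\cong\mathbb{P}^{1}$ and $OG_{+}(3,6)\cong\mathbb{P}^{3}$) is a correct observation the paper omits, though harmless since the lemma is only applied to $OG_{+}(5,10)$.
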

\begin{proof}
We have to calculate the number of Schubert cycles of dimension 6,
that is the number of sequences $r-1\ge\lambda_{1}>...>\lambda_{t}\ge0$
such that $\sum_{i=1}^{t}\lambda_{i}=3$, $t\equiv r$ $(mod\,2)$.
We get
\begin{itemize}
\item If $r$ is odd, $\lambda=(3)$ and $\lambda=(2,1,0)$;
\item If $r$ is even, $\lambda=(3,0)$ and $\lambda=(2,1)$.
\end{itemize}
\end{proof}
We now deal with complete intersections in symplectic Grassmannians
$SG(r,s)$ with $2\le r\le m=\frac{s}{2}$. Let us recall the useful notation
in \cite{Cos13}. Let $t$ be an integer such
that $0\le t\le r$. Given a sequence of integers $\lambda=(\lambda_{1},...,\lambda_{t})$
of length $t$ such that 
\[
m\ge\lambda_{1}>...>\lambda_{t}>0
\]
let $\tilde{\lambda}=(\tilde{\lambda}_{t+1},...,\tilde{\lambda}_{m})$
be the unique sequence of length $m-t$ such that
\begin{itemize}
\item $m-1\ge\tilde{\lambda}_{t+1}>...>\tilde{\lambda}_{m}\ge0$,
\item $\tilde{\lambda}_{j}+\lambda_{i}\ne m$ for every $i=1,..,t$ and
$j=t+1,...,m$.
\end{itemize}
The Schubert varieties in $SG(r,s)$ are parametrized by pairs $(\lambda,\mu)$,
where $\mu$ is any subsequence of $\tilde{\lambda}$ of length $r-t$.
Given an isotropic flag of subvector spaces $F_{\bullet}$

\[
0\subseteq F_{1}\subseteq F_{2}\subseteq...\subseteq F_{m}\subseteq F_{m-1}^{\perp}\subseteq F_{m-2}^{\perp}\subseteq...\subseteq F_{1}^{\perp}\subseteq\C^{s}
\]
$\Omega_{(\lambda,\mu)}(F_{\bullet})$ is defined as the closure of
the locus

\[
\begin{array}{c}
\{\left[W\right]\in SG(r,s)/\dim(W\cap F_{m+1-\lambda_{i}})=i\,\mathrm{for\,1\le i\le t};\\
\dim(W\cap F_{\mu_{j}}^{\perp})=j\,\mathrm{for\,}t<j\le r\}.
\end{array}
\]
Suppose $(\lambda,\mu)=({\lambda}{}_{1},...,{\lambda}_{t},\tilde{\lambda}_{i_{t+1}},...,\tilde{\lambda}_{i_{r}})$,
the codimension of $\Omega_{(\lambda,\mu)}(F_{\bullet})$ is (see
\cite[p. 57]{Cos13}) 
\[
\mathrm{codim}(\Omega_{(\lambda,\mu)}(F_{\bullet}))=\sum_{i=1}^{t}\lambda{}_{i}+dis(\lambda,\mu).
\]
The set all $\sigma_{(\lambda,\mu)}=\left[\Omega_{(\lambda,\mu)}(F_{\bullet})\right]$
of codimension $k$ is a basis of $H^{2k}(SG(r,s),\Z)$ by Ehresmann's
Theorem. The proof of the following lemma is the same of the case
of $OG(r,2m+1)$.
\begin{lem}
\label{lem:Betti SG}Let $2\le r\le m=\frac{s}{2}$, then 

\[
b_{4}(SG(r,s))=\begin{cases}
2 & m-r\ge1\\
1 & r=m
\end{cases}
\]

\end{lem}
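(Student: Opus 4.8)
The plan is to mirror the case analysis carried out for the odd orthogonal Grassmannian in Lemma \ref{lem:Betti OG}, exploiting that the Schubert classes $\sigma_{(\lambda,\mu)}$ of codimension $2$ form a basis of $H^{4}(SG(r,s),\Z)$ by Ehresmann's theorem. Thus $b_{4}(SG(r,s))$ equals the number of admissible pairs $(\lambda,\mu)$ with $codim(\Omega_{(\lambda,\mu)}(F_{\bullet}))=\sum_{i=1}^{t}\lambda_{i}+dis(\lambda,\mu)=2$. First I would use the symplectic normalization $m\ge\lambda_{1}>\dots>\lambda_{t}>0$, which forces every part $\lambda_{i}$ to be strictly positive; hence $\sum_{i}\lambda_{i}\le2$ leaves only the three possibilities $\lambda=\emptyset$, $\lambda=(1)$ and $\lambda=(2)$, to be paired with discrepancy $2$, $1$ and $0$ respectively. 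This is already simpler than the orthogonal situation, since here $\lambda'=\lambda$ always and there is neither a parity constraint on $t$ nor any enlargement $\lambda\cup\{b\}$ to track.

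Next I would analyze the discrepancy. Writing $(\lambda,\mu)$ as a length-$r$ subsequence of $(\lambda,\tilde{\lambda})$ with selected positions $i_{1}<\dots<i_{r}$ and setting $c_{j}=m-r+j-i_{j}$, the parts of $\lambda$ occupy the top positions $i_{j}=j$ for $j\le t$, each contributing $m-r$ to the discrepancy, so that $dis(\lambda,\mu)=t(m-r)+\sum_{j=t+1}^{r}c_{j}$ with $m-r\ge c_{t+1}\ge\dots\ge c_{r}\ge0$. For each $\lambda$ I would first determine $\tilde{\lambda}$ from the symplectic incompatibility rule $\tilde{\lambda}_{j}+\lambda_{i}\ne m$ (so $\tilde{\lambda}=(m-1,\dots,0)$ for $\lambda=\emptyset$, $\tilde{\lambda}=(m-2,\dots,0)$ for $\lambda=(1)$, and $\tilde{\lambda}=(m-1,m-3,m-4,\dots,0)$ for $\lambda=(2)$), and then count the choices of $\mu$ realizing the required discrepancy.

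The counting then runs as follows. For $\lambda=\emptyset$ (so $t=0$) one needs $\sum_{j=1}^{r}c_{j}=2$ with each $c_{j}\le m-r$: this admits the single partition $(1,1,0,\dots)$ when $m-r=1$ and the two partitions $(1,1,0,\dots)$, $(2,0,\dots)$ when $m-r\ge2$. For $\lambda=(1)$ one needs $(m-r)+\sum c_{j}=1$, which is solvable only when $m-r=1$ (all $c_{j}=0$), giving exactly one $\mu$. For $\lambda=(2)$ one needs $(m-r)+\sum c_{j}=0$, solvable only when $r=m$, again giving exactly one $\mu$. Summing the contributions yields $1$ class when $r=m$ (only $\lambda=(2)$ survives) and $2$ classes when $m-r\ge1$ (either two from $\lambda=\emptyset$, or one each from $\lambda=\emptyset$ and $\lambda=(1)$), which is exactly the claimed value of $b_{4}$.

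The main obstacle, and the step that makes the count come out constant equal to $2$ for all $m-r\ge1$, is the correct bookkeeping of the contribution $t(m-r)$ that the parts of $\lambda$ make to the discrepancy: it is precisely this term that forbids $\lambda=(1)$ once $m-r\ge2$ and forbids $\lambda=(2)$ once $m-r\ge1$, preventing the spurious extra classes one would otherwise overcount. Verifying that $\lambda$ indeed occupies the top positions (so that $i_{j}=j$ and $c_{j}=m-r$ for $j\le t$), together with the correct computation of $\tilde{\lambda}$ from $\tilde{\lambda}_{j}+\lambda_{i}\ne m$, is the only delicate point; the remaining enumeration of partitions of the discrepancy is routine and identical in spirit to the odd orthogonal case, so the reduction to Lemma \ref{lem:Betti OG} is essentially bookkeeping once these conventions are fixed.
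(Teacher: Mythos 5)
Your proposal is correct and takes essentially the same approach as the paper: the paper's own proof of Lemma \ref{lem:Betti SG} is just the one-line remark that the argument is identical to the odd orthogonal case of Lemma \ref{lem:Betti OG}, and your explicit enumeration of the pairs $(\lambda,\mu)$ with $\sum_{i}\lambda_{i}+dis(\lambda,\mu)=2$ is precisely that computation transposed to the symplectic conventions. The delicate points (the contribution $t(m-r)$ of the parts of $\lambda$ to the discrepancy, and the determination of $\tilde{\lambda}$ from $\tilde{\lambda}_{j}+\lambda_{i}\ne m$) are handled correctly, and the resulting counts match the claimed values in all three regimes $r=m$, $m-r=1$, $m-r\ge2$.
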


\subsection{Other examples}
\begin{prop}
\label{prop:c.i. in OG}Let $s,r$ be positive integers such that
$2\le r\le\left[\frac{s}{2}\right]$, and $\left[\frac{s}{2}\right]-r\ne1,2$
if $s$ is even. Let $s\ne2r$ (respectively, $s=2r$), let $X$ be
a $n$-dimensional weak 2-Fano complete intersection in a connected
component of the orthogonal Grassmann variety $OG(r,s)$ under the
Pl\"{u}cker (respectively, half-spinor) embedding, with X very general
if $X\subseteq OG(2,7)$. Then $\Effc 2X$ is polyhedral.\end{prop}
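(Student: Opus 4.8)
The plan is to reduce everything to the single numerical estimate that the rank of $N_{2}(X)$ is at most $2$, and then quote Lemma \ref{lem:Lemma 1}(2): as soon as $\rk N_{2}(X)\le2$ the cone $\Effc 2X$ is either a half-line or is spanned by two extremal rays, hence polyhedral in either case. So the whole content is the control of the fourth Betti number, or, in the one delicate case, of $\rk N_{2}$ directly. The first observation is that the hypothesis $\left[\frac{s}{2}\right]-r\ne1,2$ for $s$ even is tailored precisely to feed Lemma \ref{lem:Betti OG}: with $m=\left[\frac{s}{2}\right]$, the excluded range $1\le m-r\le2$ with $s$ even is the only source of $b_{4}=3$, so under our assumption the chosen connected component $Y$ of $OG(r,s)$ satisfies $b_{4}(Y)=1$ when $r=m$ (this includes the half-spinor case $s=2r$) and $b_{4}(Y)=2$ otherwise. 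This ambient input is uniform for both embeddings, since in the Pl\"{u}cker case ($s\ne2r$) and the half-spinor case ($s=2r$) the embedding is given by a very ample line bundle and $X$ is cut out by it.

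Next I would dispose of the range $n=\dim X>4$. Here the Barth--Lefschetz theorem \cite[Theorem 7.1.1]{MR2095472}, applied to a complete intersection cut out by ample divisors, gives that the restriction $H^{4}(Y,\Z)\to H^{4}(X,\Z)$ is an isomorphism because $4<n$. Hence $b_{4}(X)=b_{4}(Y)\le2$ and Lemma \ref{lem:Lemma 1} closes this range at once; note that the weak $2$-Fano hypothesis is not even needed when $n>4$.

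The hard part, and the reason for the hypotheses, is the middle-dimensional case $n=4$, where Lefschetz yields only the injection $H^{4}(Y,\Z)\hookrightarrow H^{4}(X,\Z)$ and primitive cohomology may push $b_{4}(X)$ above $2$. My approach is first to use the weak $2$-Fano constraint, i.e.\ the inequality on the multidegree forced by nef-ness of $ch_{2}(X)$ in the spirit of \cite[Proposition 31]{MR3114934}, together with the dimension equation $\dim Y-c=4$, to enumerate the finitely many admissible pairs (component $Y$, multidegree). I expect this list to collapse, among the $OG(r,s)$ satisfying our numerical hypothesis, to the single ambient $OG(2,7)$: for every other admissible $Y$ the weak $2$-Fano inequality should have no solution, so the statement is vacuous there. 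For the surviving family of $4$-folds in $OG(2,7)$ one cannot hope to bound $b_{4}(X)$, so instead I would bound $\rk N_{2}(X)$ directly. For $X$ very general a Noether--Lefschetz type genericity argument shows that the only Hodge classes in $H^{4}(X,\C)$ are restrictions of classes from $OG(2,7)$, whence every numerical class of a surface is a combination of the two restricted generators and $\rk N_{2}(X)=b_{4}(OG(2,7))=2$. The proof of Lemma \ref{lem:Lemma 1}(2) then applies verbatim with $N_{2}(X)$ in place of $H_{4}(X,\Z)$, giving polyhedrality of $\Effc 2X$.

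The main obstacle is exactly this last step: excluding extra algebraic $2$-cycle classes on the very general $4$-fold in $OG(2,7)$. The Lefschetz machinery is powerless in the middle dimension, so the control of $\rk N_{2}$ must come from a genericity (Noether--Lefschetz) statement in the style already used for the effective surfaces of \cite[Corollary 5.1]{DeArruda} and the deformation principle of \cite[Proposition 3]{Ottem:inpress}; this is precisely why the proposition must assume $X$ very general exactly when $X\subseteq OG(2,7)$.
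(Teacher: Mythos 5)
Your skeleton matches the paper's: Barth--Lefschetz plus Lemma \ref{lem:Lemma 1} for $n>4$, then an enumeration of the $n=4$ cases followed by a Noether--Lefschetz argument. But there are two genuine gaps in the $n=4$ part. First, your expectation that the list of admissible pairs collapses to the single ambient $OG(2,7)$, with the statement vacuous elsewhere, is false. The weak $2$-Fano inequality does have solutions in other components allowed by the hypotheses: the type $(1,1)$ complete intersection in $OG(3,7)$ under the Pl\"{u}cker embedding, and the type $(1,1)$ linear section of $OG_{+}(4,8)$ under the half-spinor embedding (the case $s=2r$, which your $n=4$ discussion omits entirely; there $X$ is weak $2$-Fano exactly when all $d_i=1$ and $c\le 4$, or $X$ is of type $(2)$). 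These cases are not vacuous and must be dealt with: the paper identifies both as smooth quadric fourfolds --- via the isomorphism $OG(3,7)\cong OG_{+}(4,8)$ in the first case, and via $K_X=-4H$ together with the Kobayashi--Ochiai characterization in the second --- so that $b_4(X)=2$ by Reid and Lemma \ref{lem:Lemma 1} applies. Your argument, as written, silently asserts these cases away.

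Second, for the surviving family, the $(1,1,1)$ fourfold $X_{111}$ in $OG(2,7)$, the decisive step is exactly the one you leave as ``a Noether--Lefschetz type genericity argument shows\dots''. That is the content to be supplied, not a routine appeal: the paper realizes $X_{111}$ as the zero locus on $G(2,7)$ of a section of $\left(\wedge^{2}S^{\dual}\right)^{\oplus3}\oplus Sym^{2}S^{\dual}$, i.e.\ K\"{u}chle's variety (b8), checks $h^{1,3}(X_{111})>0$ from K\"{u}chle's Hodge number computation, and only then can invoke Spandaw's Noether--Lefschetz theorem for vector bundles \cite[Theorem 2]{MR1378596} to conclude that the algebraic $2$-cycles are induced from the ambient, so that $\nicefrac{Z_{2}(X_{111})}{\mathrm{Alg}_{2}(X_{111})}\otimes\R$ has dimension at most $2$ and diagram (\ref{eq:Diagramma}) plus Lemma \ref{lem:Lemma 1} finish. (Note the conclusion is about algebraic equivalence classes, not Hodge classes on a very general member per se; your phrasing in terms of Hodge classes would also need the step from Hodge classes to numerical classes of surfaces.) Without naming a concrete theorem and verifying its hypotheses, the main obstacle you yourself identify remains open in your write-up.
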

\begin{proof}
Assume that $X$ is of type $(d_{1},...,d_{c})$. If $n>4$, by \cite[Theorem 7.1.1]{MR2095472}
and Lemma \ref{lem:Betti OG}, we have $b_{4}(X)\le2$ and we can
apply Lemma \ref{lem:Lemma 1}. Then we have $n=4$ and $c=\frac{r(2s-3r\text{\textminus}1)}{2}-4$.
If $2r=s$, by \cite[Proposition 34]{MR3114934} and Remark \ref{Rmk (2) of OG(k,2k)},
we see that $X$ is weak 2-Fano if and only if either $d_{i}=1$ and
$c\le4$, or $X$ of type $(2)$. Therefore we get $r=4$ and $X$
of type $(1,1)$. By \cite[Proposition 34]{MR3114934} we have that
$K_{X}=-c_{1}(X)=-4H$, where $H$ is the half-spinor embedding. But
then, by \cite[Corollary p.37]{MR0316745}, $X$ is a smooth quadric
in $\bP^{5}$ and then $b_{4}(X)=2$ by \cite[p.20]{Reid:thesis},
so we apply by Lemma \ref{lem:Lemma 1}.

If $2r\neq s$, since $c_{1}(OG(r,s))=(s-r-1)\sigma_{1}$ we get that
$\sum_{i=1}^{c}d_{i}\le s-r-2$. It is easy to see that this leads
to the following cases

\begin{table}[H]
\centering{}%
\begin{tabular}{|c|c|}
\hline 
$OG(r,s)$ & Type\tabularnewline
\hline 
\hline 
$OG(3,7)$ & $(1,1)$\tabularnewline
\hline 
$OG(2,7)$ & $(1,1,1)$\tabularnewline
\hline 
\multirow{2}{*}{$OG_{+}(2,6)$} & $(2)$\tabularnewline
\cline{2-2} 
 & $(1)$\tabularnewline
\hline 
\end{tabular}
\end{table}

But $OG(3,7)\cong OG_{+}(4,8)$, then the first case is a quadric.
Let $X_{111}$ be the variety $(1,1,1)$ in $OG(2,7)$. This is the
variety (b8) in the classification given in \cite{MR1326986}. Indeed,
for the reader's convenience, we point out that $X_{111}$ is the
zero-locus of a global section of the bundle
\[
\left(\wedge^{2}S^{\dual}\right)^{\oplus3}\oplus Sym^{2}S^{\dual}
\]
where $S^{\dual}$ is $(1,0;0,0,0,0,0)$ in K\"{u}chle's notation (see
\cite[Section 2.5]{MR1326986}). So $h^{1,3}(X_{111})>0$ by \cite[Theorem 4.8]{MR1326986}.
Now apply \cite[Theorem 2]{MR1378596} to conclude that the space
of algebraic cycles of $X_{{111}}$ is induced by the space of algebraic
cycles of $OG(2,7)$. Then 
\[
\nicefrac{Z_{2}(X_{111})}{\mathrm{Alg}_{2}(X_{111})}\otimes\R
\]
is at most 2-dimensional. Hence $\Effc 2{X_{111}}$ is polyhedral
by (\ref{eq:Diagramma}) and Lemma \ref{lem:Lemma 1}. The last two
varieties do not satisfy the condition $\left[\frac{s}{2}\right]-r\ne1,2$,
anyway, they are not weak 2-Fano by \cite[Example 21]{MR3114934}.
Indeed, $OG_{+}(2,6)$ is the zero section of the bundle $\O_{\P 3}(1)\oplus\O_{\P 3}(1)$
in $\P 3\times\P 3$ \cite[Proposition 2.1]{MR3397419}, and it can
easily be seen that the Pl\"{u}cker embedding is given by the divisor
$(1,1)$, then the two varieties are isomorphic to, respectively,
a complete intersection of type $(1,1)$ and $(1,2)$ in $\P 3\times\P 3$
under the embedding given by $\O_{\P 3}(1)\oplus\O_{\P 3}(1)$.\end{proof}
\begin{prop}
Let $X$ be a smooth $n$-dimensional weak 2-Fano complete intersection
in a symplectic Grassmann variety $SG(r,s)$ under the Pl\"{u}cker embedding.
Then, $b_{4}(X)\le2$. In particular $\Effc 2X$ is polyhedral.\end{prop}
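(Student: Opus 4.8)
The plan is to mirror the structure of the two analogous propositions already proved in this subsection (the Grassmannian case and the orthogonal Grassmannian case in Proposition \ref{prop:c.i. in OG}): reduce everything to the computation of $b_4(X)$ and then invoke Lemma \ref{lem:Lemma 1}. Write $X$ as a complete intersection of type $(d_1,\dots,d_c)$ in $SG(r,s)$ under the Pl\"ucker embedding. The first step is a dimension split. If $n>4$, then $2k=4<n$, so the Lefschetz Hyperplane Theorem \cite[Theorem 7.1.1]{MR2095472} gives $H^4(X,\Z)\cong H^4(SG(r,s),\Z)$, whence $b_4(X)=b_4(SG(r,s))\le 2$ by Lemma \ref{lem:Betti SG}. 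In that range the conclusion is immediate from Lemma \ref{lem:Lemma 1}(2).

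**The remaining case is $n=4$**, where Lefschetz no longer controls the middle cohomology directly, and this is where the real work sits. Here I would exploit the weak $2$-Fano hypothesis as a strong numerical constraint. The Fano index of $SG(r,s)$ is determined by $c_1(SG(r,s))=(s-r)\sigma_1$ (the symplectic analogue of the orthogonal computation $c_1(OG(r,s))=(s-r-1)\sigma_1$ used in Proposition \ref{prop:c.i. in OG}), so the adjunction/nefness condition on $ch_2(X)$ forces $\sum_{i=1}^{c} d_i$ to be bounded above by a small explicit number, exactly as in the Grassmannian and orthogonal cases. Combined with the codimension count $n=\dim SG(r,s)-c=4$, this cuts the admissible pairs $\bigl((r,s),(d_1,\dots,d_c)\bigr)$ down to a short finite list. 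The expected main obstacle is precisely the verification, case by case, that each surviving member of this list either fails to be weak $2$-Fano or has $b_4(X)\le 2$: for the small low-dimensional symplectic Grassmannians one can hope to identify $X$ with a quadric or another already-understood variety (again paralleling the $OG(3,7)\cong OG_+(4,8)$ and $OG_+(2,6)\subseteq\mathbb{P}^3\times\mathbb{P}^3$ identifications in the previous proof), where $b_4$ is known.

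**For each surviving case** I would compute $b_4(X)$ directly. When the finite list reduces $X$ to a quadric hypersurface I can cite $b_n(X)=2$ from \cite[p.20]{Reid:thesis}, as in Theorem \ref{thm:Conj for comp int in Pn} and Proposition \ref{prop:c.i. in OG}. For any genuinely new $4$-fold in the list, the Chern-character/Euler-characteristic bookkeeping of Lemma \ref{lem:b4(X11)=00003D2} provides a template: use the Koszul resolution of $\O_X$, tensor by $\Omega^p_{SG(r,s)}$ and by the relevant twists, apply adjunction and Kodaira vanishing to kill the boundary terms, and read off $\chi(\Omega_X)$ and $\chi(\Omega^2_X)$, hence $h^{1,3}(X)$ and $h^{2,2}(X)$, and finally $b_4(X)=h^{2,2}(X)+2h^{1,3}(X)$. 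In every case I expect $b_4(X)\le 2$, after which Lemma \ref{lem:Lemma 1} yields that $\Effc 2X$ is a half-line or spanned by two extremal rays, hence polyhedral. The delicate point to watch is that the cohomology-vanishing inputs analogous to Remark \ref{Rmk c(O_G)} must be available for the specific $SG(r,s)$ occurring in the list; if a case resists the direct Euler-characteristic approach, the fallback is the Bloch--Srinivas-type argument used for $X_{111}$ in Proposition \ref{prop:c.i. in OG}, bounding $\nicefrac{Z_2(X)}{\mathrm{Alg}_2(X)}\otimes\R$ via the algebraic cycles of the ambient $SG(r,s)$ and concluding through diagram (\ref{eq:Diagramma}).
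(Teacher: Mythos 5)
Your overall skeleton coincides with the paper's: for $n>4$ you invoke the Lefschetz theorem together with Lemma \ref{lem:Betti SG} to get $b_{4}(X)=b_{4}(SG(r,s))\le2$, exactly as the paper does, and for $n=4$ you reduce to a finite list of pairs $\bigl((r,s),(d_{1},\dots,d_{c})\bigr)$ cut out by the codimension count and the degree bound coming from $c_{1}$. Two points of divergence. First, a numerical slip: $c_{1}(SG(r,s))=(s-r+1)\sigma_{1}$, not $(s-r)\sigma_{1}$ (for the Lagrangian Grassmannian $SG(m,2m)$ this gives the correct index $m+1$); the paper's resulting constraints are $c=\frac{r(2s-3r+1)}{2}-4$ and $\sum_{i}d_{i}\le s-r$, which yield exactly four candidates: types $(1,1)$ and $(1,2)$ in $SG(3,6)$, and types $(1,1,1)$ and $(1,1,2)$ in $SG(2,6)$. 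Second, and more substantively, the paper disposes of all four without any Betti-number computation: none of them is weak 2-Fano. The $SG(3,6)$ cases are excluded by \cite[Proposition 36]{MR3114934}, and since $SG(2,6)$ is the zero locus of a section of $\wedge^{2}S^{\dual}=\O_{G(2,6)}(1)$ (Remark \ref{rmk:OGSGaszerolocusinG}), the $SG(2,6)$ cases are the complete intersections of types $(1,1,1,1)$ and $(1,1,1,2)$ in $G(2,6)$, excluded by \cite[Proposition 32(i)]{MR3114934}. So the $n=4$ case is vacuous, and the Koszul-resolution/Hodge-number machinery of Lemma \ref{lem:b4(X11)=00003D2} that you propose as the main engine is never needed here. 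Your plan is not wrong --- you do allow ``fails to be weak 2-Fano'' as one of the two admissible outcomes --- but as written it leaves the decisive finite case analysis unexecuted and directs the effort at the harder, unnecessary branch; to complete the argument you must actually produce the list and check each entry against the Araujo--Castravet criteria.
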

\begin{proof}
Assume that $X$ is of type $(d_{1},...,d_{c})$. If $n>4$, by \cite[Theorem 7.1.1]{MR2095472}
and Lemma \ref{lem:Betti SG}, we have $b_{4}(X)=b_{4}(SG(r,s))\le2$
and we can apply Lemma \ref{lem:Lemma 1}. If $n=4$, since $c_{1}(SG(r,s))=(s-r+1)\sigma_{1}$
we have the following conditions: $c=\frac{r(2s-3r+1)}{2}-4$ and
$\sum_{i=1}^{c}d_{i}\le s-r$. It is easy to see that this leads to
the following cases:

\begin{table}[H]
\centering{}%
\begin{tabular}{|c|c|}
\hline 
$SG(r,s)$ & Type\tabularnewline
\hline 
\hline 
\multirow{2}{*}{$SG(3,6)$} & $(1,1)$\tabularnewline
\cline{2-2} 
 & $(1,2)$\tabularnewline
\hline 
\multirow{2}{*}{$SG(2,6)$} & $(1,1,1)$\tabularnewline
\cline{2-2} 
 & $(1,1,2)$\tabularnewline
\hline 
\end{tabular}
\end{table}

The variety $SG(2,6)$ is a section of $\wedge^{2}(S^{\dual})=\O_{G(2,6)}(1)$,
as we said in Remark \ref{rmk:OGSGaszerolocusinG}. Thus the last
two case are, respectively, $(1,1,1,1)$ and $(1,1,1,2)$ in $G(2,6)$.
The first two cases are not weak 2-Fano by \cite[Proposition 36]{MR3114934},
the last two by \cite[Proposition 32(i)]{MR3114934}.
\end{proof}

\section{Fano manifolds of dimension $n$ and index $i_{X}>n-3$}

A very important invariant of a Fano variety $X$ is its index: this
is the maximal integer $i_{X}$ such that $-K_{X}$ is divisible by
$i_{X}$ in $Pic(X)$.

Fano varieties of high index have been classified: \cite{MR0316745}
proved that $i_{X}\le n+1$, $i_{X}=n+1$ if and only if $X=\P n$,
and $i_{X}=n$ if and only if $X\subset\P{n+1}$ is a smooth hyperquadric.
Furthermore the case $i_{X}=n-1$ (the so called Del Pezzo varieties)
has been classified by Fujita in \cite{MR664549,MR676113}, and the
case $i_{X}=n-2$ (the so called Mukai varieties) by Mukai (see \cite{MR995400}
and \cite{MR1668579}).

Araujo and Castravet \cite[Theorem 3]{MR3114934} succeeded to classify
2-Fano Del Pezzo and Mukai varieties. They proved:
\begin{thm}
\label{thm:ClassAC}Let X be a 2-Fano variety of dimension $n\ge3$
and index $i_{X}\ge n-2$. Then $X$ is isomorphic to one of the following.\\
\begin{minipage}[t]{1\columnwidth}%
\begin{itemize}
\item $\Pn$.
\item Complete intersection in projective spaces:\\
- Quadric hypersurfaces $X\subseteq\P{n+1}$ with $n>2$;\\
- Complete intersections of type $(2,2)$ in $\P{n+2}$ with $n>5$;
\\
- Cubic hypersurfaces $X\subseteq\P{n+1}$ with $n>7$; \\
- Quartic hypersurfaces $X\subseteq\P{n+1}$ with $n>15$; \\
- Complete intersections of type $(2,3)$ in $\P{n+2}$ with $n>11$;
\\
- Complete intersections of type $(2,2,2)$ in $\P{n+3}$ with $n>9$.
\item Complete intersection in weighted projective spaces:\\
- Degree 4 hypersurfaces in $\bP(2,1,...,1)$ with $n>11$; \\
- Degree 6 hypersurfaces in $\bP(3,2,1,...,1)$ with $n>23$; \\
- Degree 6 hypersurfaces in $\bP(3,1,...,1)$ with $n>26$; \\
- Complete intersections of type $(2,2)$ in $\bP(2,1,...,1)$ with
$n>14$.
\item $G(2,5)$.
\item $OG_{+}(5,10)$ and its linear sections of codimension $c<4$.
\item $SG(3,6)$.
\item $G_{2}/P_{2}$.\end{itemize}
\end{minipage}
\end{thm}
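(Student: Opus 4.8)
The statement is the Araujo--Castravet classification, so the plan is not to reprove the underlying list of Fano manifolds of index $\ge n-2$ but to start from it and impose the single extra condition that $ch_2(X)$ be positive in the sense of Definition \ref{def positive, nef}. Concretely, I would take as input the classification of smooth Fano $n$-folds with $i_X \ge n-2$: the Kobayashi--Ochiai description \cite{MR0316745} for $i_X \in \{n,n+1\}$ (giving $\Pn$ and the quadric), Fujita's list of Del Pezzo manifolds \cite{MR664549,MR676113} for $i_X = n-1$, and Mukai's list \cite{MR995400,MR1668579} for $i_X = n-2$. For each entry of these lists one then computes $ch_2(X)$ and keeps exactly those varieties for which $ch_2(X)$ is positive. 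The output is the stated list, so the whole proof is a finite case analysis organized by the shape of $X$: complete intersection in a (weighted) projective space, linear section of a homogeneous variety, or a product/blow-up.

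For the complete-intersection cases I would compute $ch_2(X)$ directly from the Euler sequence of the ambient space and the conormal (Koszul) sequence of $X$, exactly in the spirit of the computation carried out in Lemma \ref{lem:b4(X11)=00003D2}. For a complete intersection of type $(d_1,\dots,d_c)$ in $\P{n+c}$ this yields $ch_2(X) = \tfrac{1}{2}\bigl(n+c+1 - \sum_i d_i^2\bigr)\,h^2$, where $h$ is the hyperplane class; since $b_4(X)=1$ for $n\neq 4$ by Proposition \ref{prop:3}, the class $h^2$ spans $N_{n-2}(X)_\R$ and is positive, so $ch_2(X)$ is positive if and only if $\sum_i d_i^2 < n+c+1$. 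This single inequality reproduces the numerical criterion of \cite[3.3.1]{MR3114934} already used in Theorem \ref{thm:Conj for comp int in Pn}, and translating it over Fujita's and Mukai's degree/genus parameters produces the dimension thresholds (the various $n > \cdots$) and simultaneously discards the families — products such as $\P{a}\times\P{b}$ and blow-ups of $\Pn$ — for which the inequality can never hold. The weighted cases are handled by the analogous computation in a weighted projective space, using the quasi-smoothness of $X$ and the weighted Euler sequence.

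The genuinely nonlinear input is the collection of exceptional homogeneous Mukai and Del Pezzo varieties, namely $G(2,5)$, $OG_+(5,10)$ together with its linear sections of small codimension, $SG(3,6)$, and $G_2/P_2$. Here positivity of $ch_2(X)$ is not a single scalar inequality, so I would invoke Proposition \ref{prop:Rat.hom.var has polyh}: since these varieties are rational homogeneous, $\Effc 2X$ is generated by the classes of the two-dimensional Schubert varieties, whence $ch_2(X)$ is positive precisely when $ch_2(X)\cdot[\Omega]>0$ for each such Schubert class $[\Omega]$. This reduces the verification to finitely many intersection numbers in the Chow ring, computed from the Schubert calculus of the relevant Grassmannian or spinor variety and from the known Chern classes of $X$. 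For the linear sections of $OG_+(5,10)$ I would transport the relevant degree-four classes from the ambient spinor variety by the Lefschetz hyperplane theorem and recompute $ch_2$ by adjunction, again as in Lemma \ref{lem:b4(X11)=00003D2}, and then read off for which codimensions $c$ positivity survives, obtaining $c<4$.

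The main obstacle is exactly this exceptional block together with the completeness bookkeeping. Verifying positivity for $G(2,5)$, $OG_+(5,10)$ and its sections, $SG(3,6)$ and $G_2/P_2$ requires explicit Schubert-class computations that admit no uniform closed form, and one must also confirm that \emph{every} remaining entry of the full Fujita and Mukai lists — including the borderline dimensions where the inequality $\sum_i d_i^2 < n+c+1$ is almost tight, and the low-dimensional cases $n=4$ where Proposition \ref{prop:3} does not apply — is correctly kept or discarded. I expect the Schubert calculus for $G_2/P_2$ and for the spinor variety, and the careful treatment of these boundary cases, to be the delicate part; the remaining families collapse to the single inequality above and are routine.
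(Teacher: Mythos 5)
This theorem is not proved in the paper at all: it is the classification of Araujo and Castravet, quoted verbatim from \cite[Theorem 3]{MR3114934} and used as a black box in the subsequent corollary about polyhedrality of $\Effc 2X$ and $\Effc 3X$. So there is no internal proof to measure your attempt against; the honest comparison is between your reconstruction and the proof in the cited reference. On that score your outline does capture the actual strategy of Araujo--Castravet: start from the Kobayashi--Ochiai, Fujita and Mukai classifications of Fano $n$-folds with $i_X\ge n-2$, compute $ch_2$ entry by entry, and keep exactly the positive ones. The formula $ch_2(X)=\tfrac12\bigl(n+c+1-\sum_i d_i^2\bigr)h^2$ and the resulting criterion $\sum_i d_i^2<n+c+1$ are correct and are indeed the engine behind the various thresholds $n>\cdots$, and reducing positivity on the homogeneous exceptional varieties to finitely many pairings against $2$-dimensional Schubert classes is the right move.

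That said, as a proof your text is a programme rather than an argument: essentially all of the content is deferred. The Schubert-calculus verifications for $G(2,5)$, $OG_{+}(5,10)$ and its linear sections, $SG(3,6)$ and $G_{2}/P_{2}$ are only announced; the boundary cases where the inequality is nearly tight, and the low-dimensional cases $n=4$ where $b_4(X)=1$ can fail and $h^2$ need not span $N_{n-2}(X)_{\R}$, are flagged as ``delicate'' but not resolved; and the elimination of the non--complete-intersection entries of Fujita's and Mukai's lists (products, blow-ups) cannot be ``discarded by the inequality,'' since the inequality is derived only for complete intersections --- those cases need their own (easy but separate) computations. If the goal is to match what the paper does, a citation to \cite[Theorem 3]{MR3114934} is the correct and complete ``proof''; if the goal is to reprove the classification, the case analysis you describe is exactly what must be carried out, and none of it has been.
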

Here $G_{2}/P_{2}$ is a 5-dimensional homogeneous variety for a group
of type $G_{2}$. Using the results in the previous sections we can prove Theorem \ref{Teorema1}.

\begin{proof}
For $\Effc 2X$: In the case $\Pn$ and its complete intersections,
we can invoke Theorem \ref{thm:Conj for comp int in Pn}. Since none
of the complete intersections in $\Pw$ of the list has dimension
4, we can use Proposition \ref{prop:3}. Also $G(2,5)$, $OG_{+}(5,10)$,
$SG(3,6)$ and $G_{2}/P_{2}$ are rational homogeneous varieties,
then their cone of pseudoeffective 2-cycles is polyhedral by Proposition
\ref{prop:Rat.hom.var has polyh}. Whereas the complete intersections
of $OG_{+}(5,10)$ have polyhedral cone of pseudoeffective 2-cycles
by Proposition \ref{prop:c.i. in OG}.

For $\Effc 3X$: In Theorem \ref{thm:ClassAC}, the only complete
intersections of dimension 6 in a weighted projective space are the
one of type $(2,2)$ in $\bP^{8}$ and the smooth quadric $Q\subseteq\P 7$.
The first one is not weak 3-Fano since by \cite[Equation (3.1)]{MR3114934},
$ch_{3}(X)=-\frac{7}{6}h_{|X}^{3}$ where $h$ is the class of an
hyperplane in $\P 8$. Then $h_{|X}^{3}$ is effective, and $ch_{3}(X)\cdot h_{|X}^{3}=-\frac{7}{6}h_{|X}^{6}<0$.
For the quadric, by \cite[p.20]{Reid:thesis} $b_{6}(Q)=2$, then
$\Effc 3X$ is polyhedral by Lemma \ref{lem:Lemma 1}.%
{} For the other complete intersections we can use Proposition \ref{prop:3},
whilst for the rational homogeneous varieties we can use Proposition
\ref{prop:Rat.hom.var has polyh}. Also for the complete intersections
in $OG_{+}(5,10)$ we have $b_{6}(X)=2$, because $b_{6}(OG_{+}(5,10))=2$
by Lemma \ref{lem:b6OG(r,2r)} and we can use \cite[Theorem 7.1.1]{MR2095472}.
\end{proof}

\bibliographystyle{amsalpha}
\providecommand{\bysame}{\leavevmode\hbox to3em{\hrulefill}\thinspace}
\providecommand{\MR}{\relax\ifhmode\unskip\space\fi MR }
\providecommand{\MRhref}[2]{%
  \href{http://www.ams.org/mathscinet-getitem?mr=#1}{#2}
}
\providecommand{\href}[2]{#2}

\end{document}